\theoremstyle{plain} 
\newtheorem{Thm}{Theorem}
\newtheorem{Lem}{Lemma}
\newtheorem{Def}{Definition}
\newtheorem{Cor}{Corollary}
\newtheorem{proposition}{Proposition}
\theoremstyle{definition}
\def\re{\mathbb R} 
\def\lV{\left\Vert }
\def\rV{\right\Vert}
\def\lv{\left\vert }
\def\rv{\right\vert}
\begin{document}
\date{November 12, 2014}

\title{Order preserving and order reversing operators on the class
of convex functions in Banach spaces}

\author{Alfredo N. Iusem\footnote{Instituto de Matem\'atica Pura e Aplicada 
(IMPA), Estrada Dona
Castorina 110, Rio de Janeiro, RJ, CEP 22460-320, Brazil,
{\tt iusp@impa.br}.
The work of this author author was partially supported by CNPq grant no. 301280/86.}
\qquad Daniel Reem\footnote{Instituto de Ci\^encias Matem\'aticas e de Computa\c c\~ao 
(ICMC), University of S\~ao Paulo at S\~ao Carlos, Avenida Trabalhador S\~ao-carlense 400, Centro,
CEP 13566-590, S\~ao Carlos, SP, Brazil,
{\tt dream@icmc.usp.br}.
Part of the work of this author was done while he was at the Instituto de Matem\'atica Pura e Aplicada (IMPA), Rio de Janeiro, Brazil, and the work was partially supported by an IMPA ``P\'os-doutorado de Excel\^encia" postdoctoral scholarship.}
\qquad Benar F. Svaiter\footnote{Instituto de Matem\'atica Pura e Aplicada, 
(IMPA), Estrada Dona
Castorina 110, Rio de Janeiro, RJ, CEP 22460-320, Brazil,
{\tt benar@impa.br}.
The work of this author author was partially supported by
CNPq grants no. 
474944/2010-7, 303583/2008-8 and  FAPERJ grant E-26/110.821/2008.
}
}

\maketitle

\begin{abstract}

A remarkable result by
S. Artstein-Avidan 
and V. Milman 
states that, up to pre-composition with
affine operators, addition
of affine functionals, and multiplication by positive scalars, the only {\it fully order
preserving} mapping acting on the class of lower semicontinuous proper convex functions
defined on $\re^n$ 
is the identity operator, and the only {\it fully order reversing} one 
acting on the same set 
is the Fenchel conjugation. Here {\it fully order preserving (reversing) mappings}
are understood to be those which preserve (reverse) the pointwise order among convex
functions, are invertible, and such that their inverses also preserve (reverse)
such order.   
In this paper we establish a suitable extension of 
these results to  
order preserving and order reversing operators acting on the class of lower
semicontinous proper convex functions defined on arbitrary
infinite dimensional Banach spaces.

\end{abstract}

\bigskip  
\bigskip 

\noindent {\bf Key words:} order preserving operators, order reversing operators,
Fenchel conjugation, convex functions, lower semicontinuous functions, Banach space,
involution. 

\bigskip  
\bigskip 

\noindent{\bf Mathematical Subject Classification (2010):} 46N10, 46B10.

\newpage

\section{Introduction}\label{s1}

In their remarkable paper \cite{AM2}, 
S. Artstein-Avidan 
and V. Milman 
established
an interesting result on certain mappings acting on the class of convex functions defined
on $\re^n$. More precisely, let $\mathscr{C}(\re^n)$ be the set of lower semicontinuous
proper convex functions $f:\re^n\to\re\cup\{+\infty\}$. For $f,g\in\mathscr{C}(\re^n)$,
we write $f\le g$ if and only if $f(x)\le g(x)$ for all $x\in\re^n$. An operator 
$T:\mathscr{C}(\re^n)\to\mathscr{C}(\re^n)$ is 
said to be {\it order preserving} 
if $T(f)\le
T(g)$ whenever $f\le g$,
and {\it order reversing} 
if $T(f)\ge
T(g)$ whenever $f\le g$. We will say that $T$ is fully order preserving (reversing) if,
in addition, $T$ is invertible and its inverse $T^{-1}$ is also order preserving (reversing).
The main results in \cite{AM2} state that, up to pre-composition with
affine operators, addition
of affine functionals, and multiplication by positive scalars, the only fully order
preserving mapping acting on $\mathscr{C}(\re^n)$ 
is the identity operator, and the only fully order reversing one 
acting on the same set 
is the Fenchel conjugation (also called Legendre transform, or convex conjugation), 
meaning the operator $T$ given by $T(f)=f^*$ for
all $f\in\mathscr{C}(\re^n)$, where $f^*$ denotes the Fenchel conjugate
of $f$, defined as $f^*(u)=\sup_{x\in\re^n}\{\langle u,x\rangle -f(x)\}$
for all $u\in\re^n$. 

We mention that order preserving isomorphisms acting on sets with specific structures
had been studied for some time, in some cases in connection with applications to
Physics (see. e.g., \cite{Mol}, \cite{NoS}, \cite{Rot}, \cite{Scr}), but Artstein-Avidan
and Milman's work \cite{AM2} was the first to deal with this issue  in connection with
the space of convex functions.   

The results from \cite{AM2} were the starting point of several interesting
developments, consisting mainly of replacing $\mathscr{C}(X)$ by other sets. Along
this line, we mention for instance 
the characterizations of fully order reversing mappings 
acting on:
\begin{itemize}
\item[i)]  the class of
$s$-concave functions (see \cite{AM1}), 
\item[ii)] the family of nonnegative convex functions vanishing at $0$ 
(see \cite{AM3}),  
\item [iii)] closed and convex cones,
with the order given by the set inclusion, and the polarity
mapping playing the role of the Fenchel conjugation (see \cite{Sch}),  
\item[iv)] closed and convex sets 
(see \cite{BoS}, \cite{Slo}),
\item [v)] ellipsoids 
(see \cite{ArS}).
\end{itemize} 
Additional recent related results can be found also in \cite{AAF}, \cite{AM4}, \cite{AM5}
and \cite{MSS}.

All these papers deal exclusively
with the finite dimensional case. In \cite{Wri}, it is proved
that the composition of two order reversing bijections acting
on the class of convex functions defined on certain locally convex topological
vector spaces is the identity if and only in the same happens
with their restrictions to the subset of affine functions, 
but
no characterization of the kind established in \cite{AM2} is given.
To our
knowledge, the extension of the above described
results in \cite{AM2} to infinite dimensional spaces
remains as an open issue. Addressing it is the main goal
of this paper. More precisely, we will generalize the 
analysis of
order preserving and order reversing operators 
developed by Milman and
Artstein-Avidan 
to the setting
of arbitrary Banach spaces. As a corollary, we also obtain
a characterization of order preserving involutions (see Corollary
\ref{c7}).     

We mention now the main differences between \cite{AM2}
and our work. 
 
When dealing, as we do, with arbitrary Banach spaces, the analysis becomes 
substantially more involved, demanding different tools.
This situation occurs at several points in the analysis.
For instance, 
a key element of the proof consists of establishing
the fact that the restriction of the operators of interest
to the space of affine functions is itself affine (see
Propositions \ref{p4} and \ref{p6} for a precise statement).
In the infinite dimensional case, we need to show that such
restriction is also continuous (cf. Propositions \ref{pr:el}
and \ref{p7}), which is trivial 
in a finite dimensional setting.

We mention now another difference between our approach and the one in \cite{AM2}.
As Milman and Artstein-Avidan rightly point out in \cite{AM2}, one can establish the characterization 
for the order preserving case and obtain the one for the order reversing case as an easy
corollary, or make the full analysis for order reversing operators and then get the result for
order preseving ones as a corollary. In \cite{AM2} they chose the second option, while we
follow in this paper the first one. 
As a consequence, excepting for some basic results, (e.g. Proposition \ref{p1}), or 
the use of some classical tools (e.g. Lemma \ref{l1}), which appear in \cite{AM2} as well as
in our work, both prooflines are quite  different. 

We also obtain, as a by-product of our analysis, some results which are possibly interesting
on their own, like two characterizations of affine functions in general Banach spaces (see Proposition
\ref{p2} and Lemma \ref{l2}), which are fully unrelated to the contents of \cite{AM2}.

\section{The order preserving case}\label{s2}
 
We start by introducing some quite standard notation.
Let $X$ be a real Banach space of dimension at least $2$. 
We denote by $X^*$ and $X^{**}$  its topological dual and
bidual, respectively. We define as $\mathscr{C}(X)$ the set of lower semicontinuous proper
convex functions $f:X\to\re\cup\{+\infty\}$, where the lower semicontinuity is understood 
to hold with respect to the strong (or norm) topology in $X$. For $f\in \mathscr{C}(X)$, 
$f^*:X^*\to\re\cup\{+\infty\}$ will denote
its Fenchel conjugate, defined as $f^*(u)=\sup_{x\in X}\{\langle u,x\rangle - f(x)\}$
for all $u\in X^*$,
where $\langle\cdot,\cdot\rangle$ stands for the duality coupling, i.e. $\langle u,x\rangle
=u(x)$ for all $(u,x)\in X^*\times X$. As usual, $f^{**}:X^{**}\to\re\cup\{+\infty\}$  
will indicate the  biconjugate of $f$, i.e. $f^{**}=(f^*)^*$. Given a
linear and continuous operator
$C:X^*\to X^*$, $C^*:X^{**}\to X^{**}$ will denote the adjoint of $C$, defined by
the equation $\langle C^*a,u\rangle =\langle a,Cu\rangle$ for all $(a,u)\in
X^{**}\times X^*$. We consider the pointwise order in $\mathscr{C}(X)$, i.e., given
$f,g\in \mathscr{C}(X)$, we write $f\le g$ whenever $f(x)\le g(x)$ for all $x\in X$.
We denote as $\re_{++}$ the set of strictly positive real numbers.
  
The next definition introduces the family of operators on $\mathscr{C}(X)$
whose characterization is the main goal of this paper.

\begin{Def}\label{d1}
\begin{itemize}
\item[i)] An operator $T:\mathscr{C}(X)\to\mathscr{C}(X)$ is {\it order preserving} whenever
$f\le g$ implies $T(f)\le T(g)$.
\item[ii)] An operator $T:\mathscr{C}(X)\to\mathscr{C}(X)$ is {\it fully order preserving} whenever
$f\le g$ iff $T(f)\le T(g)$, and additionally $T$ is onto.
\item[iii)] We define as $\mathscr{B}$ the family of fully order preserving 
operators $T:\mathscr{C}(X)\to\mathscr{C}(X)$. 
\end{itemize}
\end{Def}

We will prove that $T$ belongs to $\mathscr{B}$ if and only if there exist  
$c\in X$, $w\in X^*$, $\beta\in\re$, $\tau\in\re_{++}$ and a continuous automorphism 
$E$ of $X$ such that 
$$
T(f)(x) =\tau f(Ex+c) +\langle w,x\rangle+\beta,
$$
for all $f\in \mathscr{C}(X)$ and all $x\in X$ (see Theorem \ref{t1} below).

We mention that 
for technical reasons, we work in this paper under the assumption that
dim$(X)\ge 2$. We do not make this fact explicit in 
our main results (Theorems \ref{t1} and \ref{t2}, and Corollary \ref{c7}), because
for the case of dim$(X)=1$ (and indeed for any finite  dimensional $X$), all
of them 
have been
established in \cite{AM2}. 

We begin with an elementary property of the operators in $\mathscr{B}$.

\begin{proposition}\label{p1} If $T$ belongs to $\mathscr{B}$ then,
\begin{itemize}
\item[i)] $T$ is one-to-one,
\item[ii)] given a family $\{f_i\}_{i\in I}\subset \mathscr{C}(X)$ such that $\sup_{i\in I}f_i$
belongs to $\mathscr{C}(X)$ (i.e., the supremum is not identically 
equal to $+\infty$), it holds that
$T(\sup_{i\in I}f_i)=\sup_{i\in I}T(f_i)$.
\end{itemize}
\end{proposition}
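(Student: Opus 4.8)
The plan is to derive both statements directly from the definition of $\mathscr{B}$, using only the two-sided equivalence $f\le g\iff T(f)\le T(g)$ together with surjectivity of $T$; no properties of convex functions beyond the elementary structure of $\mathscr{C}(X)$ will be needed.

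For part (i), I would argue as follows. Suppose $T(f)=T(g)$ for some $f,g\in\mathscr{C}(X)$. Then in particular $T(f)\le T(g)$ and $T(g)\le T(f)$. By the defining property in Definition \ref{d1}(ii), the first inequality yields $f\le g$ and the second yields $g\le f$, whence $f=g$. This settles injectivity.

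For part (ii), fix a family $\{f_i\}_{i\in I}\subset\mathscr{C}(X)$ whose pointwise supremum $f:=\sup_{i\in I}f_i$ lies in $\mathscr{C}(X)$. The first step is the easy inequality: since $f_i\le f$ for every $i$, order preservation gives $T(f_i)\le T(f)$ for every $i$, so $g:=\sup_{i\in I}T(f_i)\le T(f)$. Before using $g$, I would record that $g$ itself belongs to $\mathscr{C}(X)$: being a supremum of members of $\mathscr{C}(X)$, it is convex, lower semicontinuous, and takes values in $\re\cup\{+\infty\}$, and it is proper because it is dominated by the proper function $T(f)$. The second step is the reverse inequality, and this is where surjectivity of $T$ enters — the one genuinely non-trivial ingredient. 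Since $T$ is onto, pick $h\in\mathscr{C}(X)$ with $T(h)=g$. For each $i$ we have $T(f_i)\le g=T(h)$, so the equivalence in Definition \ref{d1}(ii) gives $f_i\le h$; taking the supremum over $i$ yields $f\le h$. Applying $T$ (order preserving) gives $T(f)\le T(h)=g$. Combining the two steps, $T(f)=g=\sup_{i\in I}T(f_i)$, as claimed.

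I do not anticipate a serious obstacle here: the only points requiring care are the bookkeeping verification that $\sup_{i\in I}T(f_i)\in\mathscr{C}(X)$, so that choosing $h$ with $T(h)=\sup_{i\in I}T(f_i)$ is legitimate, and the observation that it is precisely the ``only if'' direction of the equivalence in Definition \ref{d1}(ii), combined with the ontoness of $T$, that supplies the reverse inequality in (ii).
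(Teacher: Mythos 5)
Your proposal is correct and follows essentially the same argument as the paper: injectivity from the two-sided equivalence, the easy inequality $\sup_i T(f_i)\le T(\sup_i f_i)$ from order preservation, and the reverse inequality by using surjectivity to write $\sup_i T(f_i)=T(h)$ and then pulling the order back through $T$. The only difference is that you spell out in slightly more detail why $\sup_i T(f_i)\in\mathscr{C}(X)$, which the paper leaves implicit.
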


\begin{proof} 
\begin{itemize}
\item[i)] If $T(f)=T(g)$ then $T(f)\le T(g)$ and $T(g)\le T(f)$, so that, since
$T$ is fully order preserving,  $f\le g$ and $g\le f$, implying that $f=g$.
\item[ii)] Let $\widehat f = \sup_{i\in I}f_i$. Since $f_i\le \widehat f$ for all $i\in I$
and $T$ is order preserving, we get that $T(f_i)\le T\left(\widehat f\right)$ for all $i\in I$,
so that
\begin{equation}\label{e1}
\sup_{i\in I}T(f_i)\le T\left(\widehat f\right)=T\left(\sup_{i\in I}f_i\right).
\end{equation}
In view of \eqref{e1}, $\sup_{i\in I}T(f_i)$ belongs to $\mathscr{C}(X)$. Since $T$ is
onto, there exists $g\in \mathscr{C}(X)$ such that $\sup_{i\in I} T(f_i)=T(g)$. Observe
that $T(f_i)\le\sup_{i\in I}T(f_i)=T(g)$, so that, since $T$ is fully order preserving,
$f_i\le g$ for all $i\in I$, and hence $\widehat f=\sup_{i\in I} f_i\le g$.
Since $T$ is order preserving,	
\begin{equation}\label{e2}
T\left(\sup_{i\in I}f_i\right)=T\left(\widehat f\right)\le T(g)=\sup_{i\in I}T(f_i).
\end{equation}
The result follows from \eqref{e1} and \eqref{e2}.
\end{itemize}
\end{proof}

An important consequence of Proposition \ref{p1}(ii) is that an
operator in $\mathscr{B}$ is fully determined by its action on the
family of affine functions on $X$, because any element of $\mathscr{C}(X)$
is indeed a supremum of affine functions (see, e.g., \cite{VaT}, p. 90). 
Hence, we will analyze next
the behavior of the restrictions of operators in $\mathscr{B}$ to the
family of affine functions.

Define $\mathscr{A}\subset\mathscr{C}(X)$ as $\mathscr{A}=\{h:X\to\re {\rm \,\,\,such\,\,\,that\,\,\,} h 
{\rm \,\,\,is\,\,\, affine\,\,\, and\,\,\, continuous}\}$.
By definition, any $h\in\mathscr{A}$ is of the form 
$h(x)=\langle u,x\rangle+\alpha$ with
$(u,\alpha)\in X^*\times\re$. From now on such $h$ will be denoted as 
$h_{u,\alpha}$.
For $f\in \mathscr{C}(X)$, define 
$A(f)\subset\mathscr{A}$ as $A(f)=\{h\in\mathscr{A}: h\le f\}$.

We continue with an elementary characterization of affine functions.

\begin{proposition}\label{p2}
For all $f\in \mathscr{C}(X)$, $f$ belongs to $\mathscr{A}$ if and only if there exists a unique $u\in X^*$
such that $h_{u,\alpha}\in A(f)$ for some $\alpha\in\re$.
\end{proposition}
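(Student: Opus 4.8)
The plan is to prove the two implications separately; the forward one is
elementary, while the converse rests on the classical fact, recalled in the
paragraph following Proposition \ref{p1}, that every element of $\mathscr{C}(X)$
is the supremum of the continuous affine functions lying below it (see \cite{VaT}).

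For the forward implication, suppose $f\in\mathscr{A}$, say $f=h_{u_0,\alpha_0}$.
Existence of an admissible slope is immediate, since $h_{u_0,\alpha_0}=f\le f$
gives $h_{u_0,\alpha_0}\in A(f)$. For uniqueness I would take any $h_{u,\alpha}\in A(f)$,
so that $\langle u-u_0,x\rangle\le\alpha_0-\alpha$ for all $x\in X$, and then replace
$x$ by $tx$ and let $t\to\pm\infty$ to force $\langle u-u_0,x\rangle=0$ for every
$x\in X$, i.e. $u=u_0$. Thus $u_0$ is the unique slope appearing in $A(f)$.

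For the converse, assume there is a unique $u\in X^*$ such that $h_{u,\alpha}\in A(f)$
for some $\alpha\in\re$. This uniqueness forces every element of $A(f)$ to have the form
$h_{u,\alpha}$ with this fixed $u$, so, setting $S=\{\alpha\in\re:h_{u,\alpha}\le f\}$,
we have $A(f)=\{h_{u,\alpha}:\alpha\in S\}$, and $S\neq\emptyset$ because the hypothesis
already asserts existence of such an $\alpha$. Since $f$ is proper, pick $x_0$ with
$f(x_0)<\infty$; then $\alpha\le f(x_0)-\langle u,x_0\rangle$ for every $\alpha\in S$,
so $\alpha^{*}:=\sup S$ is a real number. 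I would then invoke $f=\sup A(f)$ to obtain, for
every $x\in X$,
$$
f(x)=\sup_{\alpha\in S}\bigl(\langle u,x\rangle+\alpha\bigr)=\langle u,x\rangle+\alpha^{*},
$$
whence $f=h_{u,\alpha^{*}}\in\mathscr{A}$.

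I do not expect a serious obstacle here: the real content is the cited representation
of lower semicontinuous proper convex functions as suprema of their continuous affine
minorants. The only points requiring care are that the index set $S$ in the converse is
nonempty (from the existence half of the hypothesis) and bounded above (from properness
of $f$), so that $\alpha^{*}$ is finite, and that in the forward direction one must verify
both existence and uniqueness of the slope rather than just uniqueness.
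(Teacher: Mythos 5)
Your proposal is correct and follows essentially the same route as the paper: the forward direction is the elementary observation that an affine function cannot be minorized by one with a different slope (which you spell out via the $t\to\pm\infty$ scaling argument the paper leaves implicit), and the converse uses the representation $f=\sup_{h\in A(f)}h$ together with properness of $f$ to conclude that the supremum of the admissible constants is finite, exactly as in the paper. No gaps.
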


\begin{proof}
The ``only if" statement is elementary (an affine function 
cannot be majorized by another one with a different slope). 

We prove now the ``if" statement. A well known result in convex analysis
(see, e.g., \cite{VaT}, p. 90), establishes
that $f=\sup_{h\in A(f)}h$ for all $f\in \mathscr{C}(X)$, 
so that $A(f)\neq\emptyset$ for all $f\in \mathscr{C}(X)$.
If there exists a unique $u\in X^*$ such that $h_{u,\alpha}$ belongs to $A(f)$, then 
$A(f)=\{h_{u,\alpha_i}\}_{i\in I}$ for some set $I\neq\emptyset$, and hence, 
\begin{equation}\label{e4}
f(x)=\sup_{h\in A(f)}h(x)=\sup_{i\in I}h_{u,\alpha_i}(x)=\sup_{i\in I}\{\langle u,x\rangle+\alpha_i\}=
\langle u,x\rangle+\sup_{i\in I}\{\alpha_i\}
\end{equation}
for all $x\in X$. Since $f$ is proper, $\sup_{i\in I}\{\alpha_i\}<\infty$. Taking $\alpha=\sup_{i\in I}\{\alpha_i\}$,
we conclude from \eqref{e4} that $f(x)=\langle u,x\rangle+\alpha$, i.e., $f$ belongs to $\mathscr{A}$.
\end{proof}

\begin{Cor}\label{c1}
If $f=h_{u,\alpha}\in\mathscr{A}$ then $A(f)=\{h_{u,\delta}:\delta\le\alpha\}$.
\end{Cor}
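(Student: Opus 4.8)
The plan is to prove the two inclusions of the claimed equality separately, with the only nontrivial ingredient being the rigidity of slopes already recorded in the ``only if'' part of Proposition \ref{p2}: an affine function cannot be majorized by an affine function of a different slope.

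First I would dispose of the easy inclusion $\{h_{u,\delta}:\delta\le\alpha\}\subseteq A(f)$. If $\delta\le\alpha$, then for every $x\in X$ we have $h_{u,\delta}(x)=\langle u,x\rangle+\delta\le\langle u,x\rangle+\alpha=f(x)$, so $h_{u,\delta}\le f$, i.e.\ $h_{u,\delta}\in A(f)$ by the definition of $A(f)$.

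For the reverse inclusion, I would take an arbitrary $h_{v,\beta}\in A(f)$, so that $\langle v,x\rangle+\beta\le\langle u,x\rangle+\alpha$, equivalently $\langle v-u,x\rangle\le\alpha-\beta$, for all $x\in X$. Replacing $x$ by $tx$ with $t>0$ and letting $t\to\infty$ forces $\langle v-u,x\rangle\le 0$ for every $x\in X$; applying the same to $-x$ gives $\langle v-u,x\rangle=0$ for all $x$, hence $v=u$. (This is exactly the ``only if'' direction of Proposition \ref{p2}, since $h_{u,\alpha}=f$ itself lies in $A(f)$ and thus already exhibits $u$ as the unique admissible slope.) Once the slope is known to be $u$, evaluating $\langle u,x\rangle+\beta\le\langle u,x\rangle+\alpha$ at any single point $x\in X$ yields $\beta\le\alpha$, so $h_{v,\beta}=h_{u,\beta}$ with $\beta\le\alpha$, as required. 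Combining the two inclusions gives $A(f)=\{h_{u,\delta}:\delta\le\alpha\}$.

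The only step requiring any thought is the rigidity of the slope, and even that is just the elementary linearity argument above (or a direct invocation of Proposition \ref{p2}); I do not expect any genuine obstacle, this being a routine corollary.
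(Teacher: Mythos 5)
Your proof is correct and follows essentially the same route as the paper, which simply declares the corollary ``immediate from Proposition \ref{p2}'': the substantive content is exactly the slope-rigidity fact (an affine function majorized by another must share its linear part), which you spell out via the standard scaling argument. No issues.
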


\begin{proof}
Immediate from Proposition \ref{p2}.
\end{proof}

\begin{Cor}\label{c2}
Consider $h_{u,\alpha}\in \mathscr{A}$, $f\in \mathscr{C}(X)$. If $f\le h_{u,\alpha}$, then
$f=h_{u,\delta}$ with $\delta\le\alpha$.
\end{Cor}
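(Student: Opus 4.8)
The plan is to derive this directly from Proposition \ref{p2} together with Corollary \ref{c1}. First I would invoke the classical representation $f=\sup_{h\in A(f)}h$ quoted in the proof of Proposition \ref{p2}, which guarantees that $A(f)\neq\emptyset$; pick any $h_{v,\beta}\in A(f)$. Since $f\le h_{u,\alpha}$ by hypothesis, we have $h_{v,\beta}\le f\le h_{u,\alpha}$, so $h_{v,\beta}$ is an affine function majorized by the affine function $h_{u,\alpha}$. By the elementary fact recorded as the ``only if'' part of Proposition \ref{p2} (an affine function cannot be majorized by another one with a different slope), this forces $v=u$. Hence every element of $A(f)$ has slope $u$; in particular there is a unique $u\in X^*$ such that $h_{u,\gamma}\in A(f)$ for some $\gamma\in\re$.

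Then the ``if'' direction of Proposition \ref{p2} gives $f\in\mathscr{A}$, say $f=h_{u',\delta}$ for some $u'\in X^*$ and $\delta\in\re$. Since for an affine function its own slope is precisely the unique slope appearing in $A(f)$ (equivalently, reapply the previous paragraph's conclusion to $f$ itself), we get $u'=u$, i.e.\ $f=h_{u,\delta}$. Finally, from $h_{u,\delta}=f\le h_{u,\alpha}$ we obtain $h_{u,\delta}\in A(h_{u,\alpha})$, and Corollary \ref{c1} identifies $A(h_{u,\alpha})=\{h_{u,\delta'}:\delta'\le\alpha\}$, so $\delta\le\alpha$, as claimed. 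There is no genuine obstacle here; the only point deserving a word of care is making the uniqueness of the slope explicit, so that the hypothesis of Proposition \ref{p2} is literally met before it is applied.
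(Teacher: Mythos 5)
Your proof is correct and follows essentially the same route as the paper's: pick an element of $A(f)$, use the majorization by $h_{u,\alpha}$ to pin its slope to $u$ (the paper cites Corollary \ref{c1} for this, you cite the ``only if'' half of Proposition \ref{p2}, which is the same fact), then apply Proposition \ref{p2} to get affineness of $f$ and Corollary \ref{c1} to get $\delta\le\alpha$. The extra care you take in identifying the slope of $f$ itself is a harmless elaboration of what the paper leaves implicit.
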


\begin{proof} Take $h_{u',\alpha'}\in A(f)$. Then $h_{u',\alpha'}\le f\le h_{u,\alpha}$,
i.e. $h_{u',\alpha'}\in A\left(h_{u,\alpha}\right)$. By Corollary \ref{c1}, $u'=u$, i.e.,
there exists a unique $u$ such that $h_{u,\alpha'}\in A(f)$ for some $\alpha'\in\re$. 
By Proposition \ref{p2},
$f$  is affine, and the result follows from Corollary \ref{c1}.
\end{proof}

Next, we prove that operators in $\mathscr{B}$ map affine functions to affine functions.

\begin{proposition}\label{p3}
If $T$ belongs to $\mathscr{B}$ then 
\begin{itemize}
\item[i)] $T(h)\in\mathscr{A}$ for all $h\in \mathscr{A}$,
\item[ii)] if $T(f)\in\mathscr{A}$ for some $f\in \mathscr{C}(X)$ then $f\in \mathscr{A}$.
\end{itemize}
\end{proposition}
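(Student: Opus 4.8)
The plan is to single out $\mathscr{A}$ as an order-theoretically distinguished subclass of $\mathscr{C}(X)$, after which the statement becomes a formal consequence of the fact that $T$ is an order isomorphism. Concretely, I would first establish the characterization: for $f\in\mathscr{C}(X)$ one has $f\in\mathscr{A}$ if and only if the set of its minorants in $\mathscr{C}(X)$, namely $D(f):=\{g\in\mathscr{C}(X):g\le f\}$, is totally ordered by $\le$. The ``only if'' direction is immediate from Corollary \ref{c2}: if $f=h_{u,\alpha}$, then every $g\in D(f)$ is of the form $h_{u,\delta}$ with $\delta\le\alpha$, and any two such functions are comparable, since $h_{u,\delta_1}\le h_{u,\delta_2}$ iff $\delta_1\le\delta_2$. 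For the ``if'' direction I would argue by contraposition: if $f\notin\mathscr{A}$, then by Proposition \ref{p2} the set $A(f)$, which is always nonempty, contains two affine minorants $h_{u,\alpha}$ and $h_{v,\beta}$ with $u\neq v$; because a nonzero continuous linear functional is unbounded on $X$, neither of these two functions majorizes the other, so $D(f)$ contains a pair of incomparable elements and is not totally ordered.

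With this characterization in hand, both assertions follow from the observation that $T$ is a bijection (Proposition \ref{p1}(i)) which, together with its inverse, preserves $\le$; hence for every $\varphi\in\mathscr{C}(X)$ the map $T$ carries $D(\varphi)$ onto $D(T(\varphi))$, i.e. $\{g\in\mathscr{C}(X):g\le T(\varphi)\}=T\bigl(\{g\in\mathscr{C}(X):g\le\varphi\}\bigr)$, and this correspondence together with its inverse is order preserving. Consequently $D(\varphi)$ is totally ordered if and only if $D(T(\varphi))$ is. Applying this with $\varphi=h\in\mathscr{A}$ and using the characterization in the forward direction gives $T(h)\in\mathscr{A}$, which is (i); applying it with $\varphi=f$ such that $T(f)\in\mathscr{A}$ and using the characterization in the reverse direction gives $f\in\mathscr{A}$, which is (ii).

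I expect the only real content to lie in the ``if'' part of the characterization, i.e.\ producing, for a non-affine $f$, two incomparable minorants; but this is precisely what Proposition \ref{p2} was arranged to supply, so the substantive work has effectively already been done, and what remains is a routine transfer of structure along the order isomorphism $T$. The one point to handle with care is that the displayed identity between the sets of minorants genuinely uses the full hypothesis ``$f\le g$ iff $T(f)\le T(g)$'' (not merely order preservation) together with surjectivity, so that $T^{-1}$ exists and is itself order preserving.
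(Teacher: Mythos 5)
Your proposal is correct. The ingredients are the same as in the paper's proof (Proposition \ref{p2}, Corollary \ref{c2}, surjectivity of $T$, and the ``iff'' in the definition of fully order preserving), but you organize them differently: you isolate an order-theoretic invariant --- $f\in\mathscr{A}$ if and only if the down-set $D(f)=\{g\in\mathscr{C}(X):g\le f\}$ is totally ordered --- and then transfer it along the order isomorphism. The paper instead works directly with two affine minorants $h_{z,\delta},h_{z',\delta'}$ of $T(h)$: it pulls them back via surjectivity to $g,g'\le h$, applies Corollary \ref{c2} to see that $g,g'$ share the slope of $h$ and are therefore comparable, pushes the comparison forward, and applies Corollary \ref{c2} again to conclude $z=z'$, so that Proposition \ref{p2} yields $T(h)\in\mathscr{A}$; item (ii) is then obtained separately by noting $T^{-1}\in\mathscr{B}$. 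Your formulation buys a symmetric treatment of (i) and (ii) in one stroke and makes explicit why affineness is preserved (it is an order-isomorphism invariant), at the cost of having to prove the extra ``if'' half of your characterization --- the incomparability of two minorants with distinct slopes, which is exactly the elementary fact underlying the paper's Corollary \ref{c2}. Both arguments are of essentially the same depth; yours is a clean conceptual repackaging.
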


\begin{proof} 
\begin{itemize}
\item[i)]
Let $h=h_{u,\alpha}$. We will use Proposition \ref{p2} in order to  
establish affineness of $T(h)$. Take $h_{z,\delta}, h_{z',\delta'} \in A(T(h))$,
i.e. $h_{z,\delta}\le T(h), h_{z',\delta'}\le T(h)$. Since $T$ is onto, there
exist $g,g'\in\mathscr{C}(X)$ such that $h_{z,\delta}=T(g), h_{z',\delta'}=T(g')$,
so that $T(g)\le T(h), T(g')\le T(h)$. Since $T$ is fully order preserving,
we get $g\le h, g'\le h$. By Corollary \ref{c2}, there exist $\eta,\eta'\in\re$
such that $g=h_{u,\eta}, g'=h_{u,\eta'}$. Assume, without loss of generality, 
that $\eta\le\eta'$ so that
$g\le g'$, and hence, since  $T$ is order preserving,
\begin{equation}\label{e5}
h_{z,\delta}=T(g)\le T(g')=h_{z',\delta'}.
\end{equation}
In view of Corollary \ref{c2}, we get from \eqref{e5} that $z=z'$. We have proved
that there exists a unique $z\in X^*$ such that $h_{z.\delta}\in A(T(h))$ for some
$\delta\in\re$, and hence $T(h)$ is affine by Proposition \ref{p2}. 

\item[ii)] Note that whenever $T\in \mathscr{B}$, its inverse, the operator $T^{-1}$, also
belongs to $\mathscr{B}$. It follows that $T^{-1}$ also maps affine functions to affine
functions, which establishes the result.
\end{itemize}
\end{proof}

We have seen that operators  $T\in\mathscr{B}$ map $\mathscr{A}$ to $\mathscr{A}$. For
$T\in \mathscr{B}$, we denote as $\widehat T:\mathscr{A}\to\mathscr{A}$ the restriction of
$T$ to $\mathscr{A}$. Note that $\widehat T$ inherits from $T$ the properties of
being onto and fully order preserving in $\mathscr{A}$. However, if we start
with an operator in $\mathscr{A}$ with these properties, and we ``lift it up"
to $\mathscr{C}(X)$, the resulting operator may not belong to $\mathscr{B}$. 
This is due to the fact that the order preserving property 
is rather weak in $\mathscr{A}$ because, as we have seen, a pair of affine functions
is ordered only when they have the same linear part. 
More precisely, consider an operator $\widehat R:\mathscr{A}\to\mathscr{A}$
which is onto and fully order preserving. We can extend it to $\mathscr{C}(X)$ in
a natural way, defining $R:\mathscr{C}(X)\to\mathscr{C}(X)$ as $R(f)=\sup_{h\in A(f)}\widehat R(h)$.
Since $f\le g$ implies $A(f)\subset A(g)$, it is easy to prove that $R$ is
order preserving, but it might fail to be fully order preserving, as the
following example shows:

Take $X=\re$ and define $\widehat R:\mathscr{A}\to\mathscr{A}$ as
$$
\widehat R(h_{u,\alpha})=\begin{cases} h_{u,\alpha} \qquad\qquad\,\,\, {\rm if}\,\,\, u\in (-1,1)\\
h_{-u,\alpha}\qquad\qquad{\rm otherwise.}
\end{cases}
$$
It is immediate that $\widehat R$ is onto and fully order preserving; in fact it is
an involution, meaning that $\widehat R\left(\widehat R(h)\right)=h$ for all $h\in\mathscr{A}$. Consider
its extension $R:\mathscr{C}(X)\to \mathscr{C}(X)$ as defined above, 
and take $f_1(x)=1/2\lv x\rv$, $f_2(x)=
\max\{x,0\}$. It is easy to check that $R(f_1)=f_1=(1/2)\lv x\rv$, $R(f_2)=\lv x\rv$, so that
$R(f_1)\le R(f_2)$, but it is not true that $f_1\le f_2$ (the inequality fails
for $x<0$). In fact any operator $\widehat R:\mathscr{A}\to \mathscr{A}$ of the form
$\widehat R(h_{u,\alpha})=h_{\psi(u),\varphi(\alpha)}$ where $\psi:X^*\to X^*$
is a bijection and $\varphi:\re\to\re$ is increasing and onto, turns out to be
onto and fully order preserving in $\mathscr{A}$, while
operators $\widehat T:\mathscr{A}\to\mathscr{A}$ which are the restrictions of operators
$T$ in $\mathscr{B}$ have a much more specific form. 

We will prove next that
$\widehat T:\mathscr{A}\to\mathscr{A}$ is indeed affine for all $T\in \mathscr{B}$,
with the following meaning: 
we identify $\mathscr{A}$ with $X^*\times\re$, associating $h_{u,\alpha}$ to the
pair $(u,\alpha)$, and hence we write $\widehat T(u,\alpha)$ instead of $\widehat T(h_{u,\alpha})$,
and consider affineness of $\widehat T$ as an operator acting on $X^*\times\re$.

In view of Proposition \ref{p3}, with this notation, for $T\in \mathscr{B}$,
$\widehat T$ maps $X^*\times\re$ to $X^*\times\re$, and so we will write
\begin{equation}\label{e6}
\widehat T(u,\alpha)=(y(u,\alpha), \gamma(u,\alpha)),
\end{equation} 
with $y:X^*\times\re\to X^*, \gamma:X^*\times\re\to\re$, i.e. $y(u,\alpha)$ is the linear
part of $T(u,\alpha)$ and $\gamma(u,\alpha)$ is its additive constant, so that
$T(h_{u,\alpha})(x)=\langle y(u,\alpha),x\rangle+\gamma(u,\alpha)$.

We must prove that both $y$ and $\gamma$ are affine. We start with $y$, establishing
some of its properties in the following two propositions. 

\begin{proposition}\label{pp3} If $T\in\mathscr{B}$ then
$y(\cdot,\cdot)$, defined by \eqref{e6},
depends only upon its first argument.
\end{proposition}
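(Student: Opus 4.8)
The plan is to show that for fixed $\alpha,\alpha'\in\re$ the slopes $y(u,\alpha)$ and $y(u,\alpha')$ coincide for every $u\in X^*$, which is exactly the claim that $y$ does not depend on its second argument. The natural mechanism is Corollary \ref{c1}: two affine functions with the same slope are comparable, and under $T\in\mathscr B$ comparability is preserved in both directions, so I want to feed comparable pairs $h_{u,\alpha}\le h_{u,\alpha'}$ (with $\alpha\le\alpha'$) through $T$ and read off information about the images. Since $h_{u,\alpha}\le h_{u,\alpha'}$ gives $T(h_{u,\alpha})\le T(h_{u,\alpha'})$, Corollary \ref{c2} forces $T(h_{u,\alpha})$ and $T(h_{u,\alpha'})$ to have the \emph{same} linear part, i.e. $y(u,\alpha)=y(u,\alpha')$ whenever $\alpha\le\alpha'$; and by symmetry this holds for all $\alpha,\alpha'$. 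That is the whole argument in outline.

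More carefully: fix $u\in X^*$ and $\alpha,\alpha'\in\re$ with $\alpha\le\alpha'$. Then $h_{u,\alpha}(x)=\langle u,x\rangle+\alpha\le\langle u,x\rangle+\alpha'=h_{u,\alpha'}(x)$ for all $x$, so $h_{u,\alpha}\le h_{u,\alpha'}$ in $\mathscr C(X)$. Applying the order preserving operator $T$ yields $T(h_{u,\alpha})\le T(h_{u,\alpha'})$. By Proposition \ref{p3}(i), $T(h_{u,\alpha'})\in\mathscr A$, say $T(h_{u,\alpha'})=h_{y(u,\alpha'),\gamma(u,\alpha')}$. Now $T(h_{u,\alpha})\le h_{y(u,\alpha'),\gamma(u,\alpha')}$, so Corollary \ref{c2} applies and tells us that $T(h_{u,\alpha})=h_{y(u,\alpha'),\delta}$ for some $\delta\le\gamma(u,\alpha')$; comparing with the definition $T(h_{u,\alpha})=h_{y(u,\alpha),\gamma(u,\alpha)}$ and using uniqueness of the slope of an affine function (the ``only if'' part of Proposition \ref{p2}), we conclude $y(u,\alpha)=y(u,\alpha')$. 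Since $u$ was arbitrary and the relation $y(u,\alpha)=y(u,\alpha')$ is symmetric in $\alpha,\alpha'$, it holds for all pairs $\alpha,\alpha'\in\re$, which is precisely the assertion that $y(u,\cdot)$ is constant, i.e. $y$ depends only on its first argument.

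I do not expect any serious obstacle here: the statement is essentially a bookkeeping consequence of Proposition \ref{p3}(i) together with Corollary \ref{c2}, and the only point requiring the slightest care is noting that the comparability $h_{u,\alpha}\le h_{u,\alpha'}$ goes through in just one direction of the order (which is all we need, since $T$ is order preserving in the forward direction). The real work of identifying $y$ — showing that the common value $y(u):=y(u,\alpha)$ is itself affine (indeed linear up to a translation) in $u$ — is deferred to the subsequent propositions and is not part of this statement.
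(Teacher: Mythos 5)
Your argument is correct and is essentially identical to the paper's proof: both take $\alpha\le\alpha'$, use $h_{u,\alpha}\le h_{u,\alpha'}$ and the order preserving property of $T$ to get $T(h_{u,\alpha})\le T(h_{u,\alpha'})$, and then invoke Corollary \ref{c2} to equate the linear parts. No differences worth noting.
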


\begin{proof}
Consider arbitrary pairs $(u,\alpha), (u,\delta)\in X^*\times\re$, and assume WLOG
that $\alpha\le\delta$, so that $h_{u,\alpha}\le h_{u,\delta}$, and, since
$T$ is order preserving,
\begin{equation}\label{e7}
h_{y(u,\alpha),\gamma(u,\alpha)}=T(h_{u,\alpha})\le T(h_{u,\delta}) =h_{y(u,\delta),\gamma(u,\delta)}.
\end{equation}
It follows from \eqref{e7} and Corollary \ref{c2} that $y(u,\alpha)=y(u,\delta)$ and hence
$y$ does no depend upon its second argument.
\end{proof}

In view of Proposition \ref{pp3}, we will
write in the sequel $y:X^*\to X^*$, with $y(u)=y(u,\alpha)$ for an 
arbitrary $\alpha\in\re$,
and also 
\begin{equation}\label{ee7}
\widehat T(u,\alpha)=(y(u),\gamma(u,\alpha)).
\end{equation}

We recall that $\phi:X^*\to\re$ is
{\it quasiconvex} if $\phi(su_1+(1-s)u_2)\le\max\{\phi(u_1),\phi(u_2)\}$ for all
$u_1,u_2\in X^*$ and all $s\in [0,1]$. 

\begin{proposition}\label{p4} Assume that $T$ belongs to $\mathscr{B}$. Then,
\begin{itemize}
\item[i)] $y:X^*\to X^*$, defined by \eqref{ee7}, is one-to-one and onto,
\item[ii)] both 
$\langle y(\cdot),x\rangle:X^*\to\re$ and
$\langle y^{-1}(\cdot),x\rangle:X^*\to\re$
are quasiconvex for all $x\in X$.
\end{itemize}
\end{proposition}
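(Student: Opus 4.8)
The plan is to prove the two parts essentially independently, relying on Proposition~\ref{p3} (so that $\widehat T$ maps $\mathscr{A}$ onto $\mathscr{A}$), on Proposition~\ref{p1}(ii) (compatibility of $T$ with suprema, here only maxima of two affine functions), and on the elementary comparability facts for affine functions recorded in Corollaries~\ref{c1} and \ref{c2}. By part (i), the operator $y^{-1}$ is well defined, and since $T^{-1}\in\mathscr{B}$ and its associated linear-part map is $y^{-1}$ (combine part (i) with Propositions~\ref{p3} and \ref{pp3} applied to $T^{-1}$), the statement for $y^{-1}$ in part (ii) will follow automatically from the one for $y$.

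For part (i), surjectivity of $y$ is immediate: by Proposition~\ref{p3}, $\widehat T$ is onto $\mathscr{A}$, so for a given $z\in X^*$ there is $(u,\alpha)$ with $\widehat T(u,\alpha)=(z,0)$, whence $y(u)=z$. For injectivity I would argue by contradiction: suppose $y(u_1)=y(u_2)=z$ with $u_1\neq u_2$. Then $T(h_{u_1,0})=h_{z,\gamma(u_1,0)}$ and $T(h_{u_2,0})=h_{z,\gamma(u_2,0)}$ have the same linear part $z$, hence are comparable (Corollary~\ref{c1}); since $T\in\mathscr{B}$ reflects the order ($T(f)\le T(g)$ implies $f\le g$), it follows that $h_{u_1,0}$ and $h_{u_2,0}$ are comparable, which contradicts Corollary~\ref{c2} because $u_1\neq u_2$.

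For part (ii) it then suffices to show that $u\mapsto\langle y(u),x\rangle$ is quasiconvex for every $x\in X$ and every $T\in\mathscr{B}$. Fix $u_1,u_2\in X^*$ and $s\in[0,1]$, and put $w=su_1+(1-s)u_2$. Since
$$h_{w,0}=s\,h_{u_1,0}+(1-s)\,h_{u_2,0}\le\max\{h_{u_1,0},h_{u_2,0}\}\in\mathscr{C}(X),$$
applying $T$ and invoking Proposition~\ref{p1}(ii) gives
$$h_{y(w),\gamma(w,0)}\le\max\bigl\{h_{y(u_1),\gamma(u_1,0)},\,h_{y(u_2),\gamma(u_2,0)}\bigr\}.$$
The heart of the matter is then a purely geometric fact: if $h_{z,d}\le\max\{h_{z_1,c_1},h_{z_2,c_2}\}$ holds in $\mathscr{C}(X)$, then $z$ lies on the closed segment joining $z_1$ and $z_2$ in $X^*$. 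Indeed, the inequality says that for every $x\in X$ one has $\langle z-z_1,x\rangle\le c_1-d$ or $\langle z-z_2,x\rangle\le c_2-d$, i.e. the open half-spaces $\{x:\langle z-z_1,x\rangle>c_1-d\}$ and $\{x:\langle z-z_2,x\rangle>c_2-d\}$ are disjoint; unless $z=z_1$ or $z=z_2$, these are two nonempty disjoint open half-spaces, which forces their normals to be oppositely directed, that is $z-z_2=-\lambda(z-z_1)$ for some $\lambda>0$, and hence $z=\tfrac{\lambda}{1+\lambda}z_1+\tfrac{1}{1+\lambda}z_2\in[z_1,z_2]$. Applying this with $z=y(w)$ and $z_i=y(u_i)$ yields $y(w)\in[y(u_1),y(u_2)]$, so for every $x\in X$ the value $\langle y(w),x\rangle$ is a convex combination of $\langle y(u_1),x\rangle$ and $\langle y(u_2),x\rangle$, and therefore $\langle y(w),x\rangle\le\max\{\langle y(u_1),x\rangle,\langle y(u_2),x\rangle\}$, which is precisely the asserted quasiconvexity.

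I expect the main obstacle to be that geometric fact, and in particular the point that — because $X$ is a general Banach space — two disjoint nonempty open half-spaces of $X$ must be opposite half-spaces; concretely, this reduces to observing that if $p,q\in X^*$ are linearly independent then $x\mapsto(\langle p,x\rangle,\langle q,x\rangle)$ maps $X$ onto $\re^2$, so the two positivity sets necessarily meet. The remaining ingredients — the order-reflection argument in part (i), the reduction of the $y^{-1}$ statement to the $y$ statement via $T^{-1}$, and the use of Proposition~\ref{p1}(ii) — are routine once Corollaries~\ref{c1} and \ref{c2} are in hand.
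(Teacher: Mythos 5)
Your proposal is correct, and part (i) together with the reduction of the $y^{-1}$ claim to the $y$ claim via $\widehat{T}^{-1}$ coincides with what the paper does. For part (ii), however, you take a genuinely different route at the key step. After arriving at $h_{y(w),\gamma(w,0)}\le\max\{h_{y(u_1),\gamma(u_1,0)},h_{y(u_2),\gamma(u_2,0)}\}$ (same as the paper, via Proposition~\ref{p1}(ii)), the paper simply evaluates both sides at $tx$, divides by $t$ and lets $t\to\infty$, so the additive constants $\gamma(\cdot,0)$ vanish in the limit and the quasiconvexity inequality drops out directly; this is shorter and avoids any geometry. You instead prove the stronger statement that $y(w)$ lies on the segment $[y(u_1),y(u_2)]$, via the observation that the inequality forces two open half-spaces to be disjoint, hence (using that two linearly independent functionals map $X$ onto $\re^2$) their normals to be antiparallel. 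Your argument is sound, including the degenerate cases you flag, but it is essentially the half-space separation argument that the paper defers to Lemma~\ref{l2}: there, quasiconvexity of $\langle M(\cdot),x\rangle$ is converted into exactly this segment-membership property by comparing the half-spaces $U$ and $W$. So what your approach buys is an earlier and stronger conclusion (segment membership already at the level of Proposition~\ref{p4}), at the cost of duplicating work: if you adopt it, the first half of Lemma~\ref{l2} becomes redundant and only the surjectivity-onto-the-segment part (via $M^{-1}$) remains to be done. What the paper's scaling trick buys is a cleaner division of labor, keeping Proposition~\ref{p4} purely about the order structure and isolating all the affine geometry in Lemma~\ref{l2}.
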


\begin{proof}
\quad i) Suppose that $y(u_1)=y(u_2)$ for some $u_1,u_2\in X^*$. Assume WLOG that
$\gamma(u_1,0)\le\gamma(u_2,0)$. Then $T(h_{u_1,0})\le T(h_{u_2,0})$, implying,
since $T$ is fully order preserving, that $h_{u_1,0}\le h_{u_2,0}$, and hence $u_1=u_2$
by Corollary \ref{c2}, so that $y$ is one-to-one. Surjectivity of $y$ follows from
surjectivity of $\widehat T$, which is a consequence  of Proposition \ref{p3}(ii).

\noindent ii) Take $u_1,u_2\in X^*, s\in[0,1]$. Note that
$$
h_{su_1+(1-s)u_2,0}=sh_{u_1,0}+(1-s)h_{u_2,0}\le\max\{h_{u_1,0},h_{u_2,0}\}.
$$
Hence,
\begin{equation}\label{e8}
T\left(h_{su_1+(1-s)h_2,0}\right)\le T\left(\max\left\{h_{u_1,0},h_{u_2,0}\right\}\right)=
\max\left\{T\left(h_{u_1,0}\right),T\left(h_{u_2,0}\right)\right\},
\end{equation}
using the order preserving property of $T$ in the inequality and Proposition \ref{p1}(ii)
in the equality. Evaluating the leftmost and rightmost expressions in \eqref{e8} at a
point of the form $tx$ with $x\in X,t\in\re_{++}$, we get
\begin{equation}\label{e9}
\langle y(su_1+(1-s)u_2),tx\rangle +\gamma(su_1+(1-s)u_2),0)\le
\max\{\langle y(u_1),t x\rangle+\gamma(u_1,0),\langle y(u_2),t x\rangle
+\gamma(u_2,0)\}.
\end{equation}
Dividing both sides of \eqref{e9} by $t$ and taking limits with $t\to\infty$, we get
$$
\langle y(su_1+(1-s)u_2),x\rangle\le\max\{\langle y(u_1),x\rangle ,\langle y(u_2),x\rangle\},
$$
establishing quasiconvexity of $\langle y(\cdot),x\rangle$ for all $x\in X$.
Quasiconvexity of $\langle y^{-1}(\cdot),x\rangle$ follows using the same argument
with $\widehat T^{-1}$ instead of $\widehat T$.
\end{proof}

Next we recall a well known result on finite dimensional affine geometry. 

\begin{Lem}\label{l1} 
Let $V,V'$ be finite dimensional real vector spaces, with {\rm dim}$(V)\ge 2$.
If $Q:V\to V'$ is invertible and maps segments to segments, then $Q$ is affine.
\end{Lem}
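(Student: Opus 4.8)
The statement is a form of the fundamental theorem of affine geometry, so the plan is to (a) normalize $Q$ so that it fixes the origin, (b) extract from the hypotheses that $Q$ preserves \emph{betweenness} in both directions, hence carries lines bijectively onto lines, and (c) run the classical parallelogram argument to get linearity, using at the end that betweenness-preservation makes the auxiliary one-dimensional maps monotone. For (a): replacing $Q$ by $x\mapsto Q(x)-Q(0)$ changes neither the hypotheses (an affine shift maps segments to segments) nor the conclusion, so I may assume $Q(0)=0$ and it suffices to show $Q$ is linear. For (b): I would first show that $b\in[a,c]$ implies $Q(b)\in[Q(a),Q(c)]$, and in fact $Q([a,c])=[Q(a),Q(c)]$. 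Indeed, $Q([a,c])$ is by hypothesis a segment containing the three collinear points $Q(a),Q(b),Q(c)$; if $Q(b)$ were not between the other two, then one of $Q(a),Q(c)$ — say $Q(a)$ — would lie strictly between $Q(b)$ and $Q(c)$, so $Q(a)\in[Q(b),Q(c)]\subseteq Q([b,c])$, where I use $[b,c]\subseteq[a,c]$ and that $Q([b,c])$ is a segment through $Q(b),Q(c)$. Injectivity of $Q$ then forces $a\in[b,c]$, which is incompatible with $b\in[a,c]\setminus\{a,c\}$ (an easy barycentric computation). Hence betweenness is preserved, $Q([a,c])\subseteq[Q(a),Q(c)]$, and the reverse inclusion is clear, so $Q([a,c])=[Q(a),Q(c)]$; applying injectivity once more, $Q(b)\in[Q(a),Q(c)]=Q([a,c])$ forces $b\in[a,c]$, i.e. $Q^{-1}$ preserves betweenness too.

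Next I would promote this to: $Q$ maps each line onto a line. Given distinct $a,b$ and the line $\ell$ through them, collinearity-preservation gives $Q(\ell)\subseteq m$, where $m$ is the line through $Q(a),Q(b)$; for the reverse inclusion, take $r\in m$, note one of $r,Q(a),Q(b)$ lies between the other two, and in each case the $Q^{-1}$-betweenness from the previous step puts $Q^{-1}(r)$ on $\ell$. So $Q$ is a bijection carrying each line onto a line; consequently it carries parallel lines to parallel (or equal) lines and intersecting lines to intersecting lines, since a common point of two image lines would pull back to a common point of their preimages.

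Then comes the parallelogram argument. For linearly independent $x,y$, the points $0,x,x+y,y$ are the vertices of a genuine parallelogram, and $Q$ sends it to a parallelogram with vertices $0,Q(x),Q(x+y),Q(y)$ whose two sides through $0$ span $\re Q(x)$ and $\re Q(y)$ (these are independent: otherwise $0,Q(x),Q(y)$ would be collinear, hence so would $0,x,y$); the fourth vertex is then forced to equal $Q(x)+Q(y)$, so $Q(x+y)=Q(x)+Q(y)$ in this case. The collinear cases $x,y\in\re z$ reduce to the independent case via $Q(u+v)=Q((u+w)+(v-w))$ for a suitable $w\notin\re z$, once one knows $Q(-w)=-Q(w)$; this last identity follows from midpoint-preservation, which I would obtain from the parallelogram $w,v,-w,-v$ (for $v\notin\re w$): its two diagonals meet at their common midpoint $0$, and $Q$ maps this to a parallelogram whose diagonals therefore meet at $Q(0)=0$, so $0$ is the midpoint of $[Q(w),Q(-w)]$. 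Thus $Q$ is additive. Finally, on each line $\re x$ write $Q(tx)=\sigma_x(t)\,Q(x)$ with $\sigma_x$ a bijection of $\re$ fixing $0$ and $1$; additivity of $Q$ makes $\sigma_x$ additive, and betweenness-preservation makes it monotone, and a monotone additive self-map of $\re$ is linear, so $\sigma_x(t)=c_x t$ with $c_x=\sigma_x(1)=1$. Hence $Q(tx)=tQ(x)$, which together with additivity gives that $Q$ is linear.

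The delicate part is the passage from the purely local hypothesis (segments to segments) to the global statement that $Q$ is a collineation onto lines: the betweenness argument rests essentially on injectivity of $Q$, and one must handle coincident or collinear configurations carefully, both there and in the parallelogram construction. Once $Q$ is known to be a line-onto-line bijection, the remainder is the textbook fundamental theorem of affine geometry, and it is \emph{simplified} here by convexity: preservation of segments makes the auxiliary additive function monotone, so no pathological automorphisms of $\re$ can intervene and no appeal to continuity is needed.
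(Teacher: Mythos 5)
The paper does not actually prove this lemma: it quotes it as a known form of the fundamental theorem of affine geometry, with pointers to Artin, to Chubarev--Pinelis, and to Remark 6 of Artstein-Avidan--Milman. Your self-contained argument is therefore necessarily a different route from the paper's, and most of it is sound: the two-way betweenness argument (using that a segment is convex, so the image segment contains the chord between any two of its points, then invoking injectivity), the promotion to a bijection carrying lines onto lines, and the endgame in which monotonicity of the auxiliary maps $\sigma_x$ --- extracted from betweenness --- rules out pathological additive maps, are all correct and correctly exploit the convexity hypothesis.

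There is, however, one genuine gap, precisely at the step you pass over most quickly: ``consequently it carries parallel lines to parallel (or equal) lines \dots since a common point of two image lines would pull back to a common point of their preimages.'' That argument only shows that disjoint lines have disjoint images. In an ambient space of dimension at least $3$, two disjoint lines need not be parallel --- they can be skew --- so you have not yet shown that the image of the quadrilateral $0,x,x+y,y$ is a \emph{planar} parallelogram, and without planarity the ``fourth vertex is forced'' step collapses: the images of the lines $x+\re y$ and $y+\re x$ are merely two lines through $Q(x)$ and $Q(y)$, disjoint from $\re Q(y)$ and $\re Q(x)$ respectively, and their intersection point is not pinned down. The same issue affects the diagonal/midpoint argument for $Q(-w)=-Q(w)$. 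The missing ingredient is that $Q$ maps the plane spanned by $x,y$ into the plane spanned by $Q(x),Q(y)$; this is easy to supply with tools you already have: for $a,b\neq 0$ the point $ax+by$ is the midpoint of $2ax$ and $2by$, hence its image lies on the segment joining a point of $Q(\re x)=\re Q(x)$ to a point of $Q(\re y)=\re Q(y)$, which is contained in the span of $Q(x)$ and $Q(y)$. Once coplanarity of images is known, disjoint images of parallel lines are genuinely parallel and the rest of your parallelogram argument goes through. Incidentally, the ``delicate part'' you flag at the end is not where the delicacy lies: the collineation step is fine, and it is the plane preservation that needs the extra lines.
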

 
\begin{proof} The result appears, e.g., in \cite{Art}, see also Corollary 2 in \cite{ChP}.
A short proof can be found in Remark 6 of \cite{AM2}.
\end{proof}

The next corollary extends this result to our infinite dimensional setting.

\begin{Cor}\label{c3} Let $Z,Z'$ be arbitrary real vector spaces with {\rm dim}$(Z)\ge 2$.
If $Q:Z\to Z'$ is invertible and maps segments to segments, then $Q$ is affine.
\end{Cor}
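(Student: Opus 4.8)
The plan is to reduce Corollary \ref{c3} to the finite dimensional Lemma \ref{l1} by restricting $Q$ to finite dimensional affine subspaces of $Z$. The key observation is that both hypotheses — invertibility and the preservation of segments — are ``local'' enough to survive such a restriction, provided we choose the subspaces carefully.

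\begin{proof}
First I would fix two arbitrary points $p_0,p_1\in Z$ and aim to show that $Q$ restricted to the segment $[p_0,p_1]$ is affine in the sense that $Q((1-s)p_0+sp_1)=(1-s)Q(p_0)+sQ(p_1)$ for all $s\in[0,1]$; once this is established for every pair of points, a standard argument (pick an origin and check additivity and homogeneity of $x\mapsto Q(x)-Q(0)$ along each line, then combine across lines using a two-dimensional plane) upgrades it to global affineness. Since $\dim(Z)\ge 2$, I can choose a third point $p_2\in Z$ so that $p_0,p_1,p_2$ are affinely independent, and let $W$ be the two-dimensional affine subspace they span; let $W'$ be the affine subspace of $Z'$ spanned by $Q(W)$. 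The restriction $Q|_W:W\to W'$ still maps segments of $W$ to segments of $W'$. The only genuine issue is invertibility of this restriction: $Q$ being globally injective makes $Q|_W$ injective, but surjectivity onto $W'$ is not automatic, and $W'$ might a priori have dimension other than $2$.

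The hard part will be controlling the image $Q(W)$: I must show it is exactly a two-dimensional affine subspace, so that $Q|_W$ is a segment-preserving bijection between two planes and Lemma \ref{l1} applies. To handle this I would argue as follows. Because $Q$ maps segments to segments and is injective, it maps the line through $p_0,p_1$ bijectively onto a segment's worth of collinear points — in fact onto an entire line $L'$ in $Z'$ (surjectivity onto the whole line follows by applying the segment-preserving and injectivity properties to $Q^{-1}$, which also maps segments to segments since the inverse of a segment-preserving bijection is segment-preserving). Similarly the line through $p_0,p_2$ maps onto a line $M'$, and $L'\cap M'=\{Q(p_0)\}$ by injectivity, so $L'$ and $M'$ are distinct lines meeting at a point and hence span a two-dimensional affine subspace $W'$. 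A short convexity/collinearity computation then shows $Q(W)\subseteq W'$, and applying the same reasoning to $Q^{-1}$ restricted to $W'$ gives $Q^{-1}(W')\subseteq W$, whence $Q$ restricts to a bijection of $W$ onto $W'$.

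With $Q|_W:W\to W'$ a segment-preserving bijection between two real affine planes, Lemma \ref{l1} yields that $Q|_W$ is affine; in particular $Q((1-s)p_0+sp_1)=(1-s)Q(p_0)+sQ(p_1)$ for all $s\in[0,1]$. Since $p_0,p_1$ were arbitrary, $Q$ preserves all convex combinations of two points, and therefore, by the standard passage from ``midpoint/segment affine'' to ``affine'' on a real vector space (using that $x\mapsto Q(x)-Q(0)$ is then additive and $\mathbb{Q}$-homogeneous, and homogeneous over $\re$ by restricting to each line), $Q$ is affine on all of $Z$.
\end{proof}
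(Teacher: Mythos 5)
Your proof is correct and follows essentially the same route as the paper: restrict $Q$ to a two-dimensional subspace containing the given points and invoke Lemma \ref{l1}. The only real differences are that you work with the affine plane through three affinely independent points, which lets you bypass the paper's separate perturbation-and-limit treatment of collinear pairs, and that you are more explicit than the paper (which simply asserts ``its image is $V'$'') about why the restricted map is onto the target plane.
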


\begin{proof} Take  vectors $z_1,z_2\in Z$  and $s\in [0,1]$. We must prove
that 
\begin{equation}\label {e10}
Q(sz_1+(1-s)z_2)=sQ(z_1)+(1-s)Q(z_2).
\end{equation} 
Assume first that $z_1, z_2$ are linearly independent.
Let $V\subset Z$ be the two-dimensional subspace spanned
by $z_1,z_2$, $V'\subset Z'$ the subspace spanned by $Q(0)$, $Q(z_1)$ and $Q(z_2)$,
and
$Q_{|V}$ 
the restriction of $Q$ to $V$. Clearly,   
$Q_{|V}$ also maps segments to segments and its image is $V'$,
so that \eqref{e10} follows by applying Lemma \ref{l1} to   
$Q_{|V}$.
 
If $z_1, z_2$ are  colinear, 
assume WLOG that $z_1\neq 0$, 
and replace $z_2$ by $\widetilde  z_2=z_2+z_3$,
with $z_3$ linearly independent of $z_1,z_2$.
Conclude, using the same argument, that \eqref{e10} holds with $\widetilde z_2$ instead
of $z_2$, and then take limits with $z_3\to 0$ in order to recover \eqref{e10} with $z_2$. 
\end{proof}

We continue with another result, possibly of some interest on its own.

\begin{Lem}\label{l2}
If a mapping $M:X^*\to X^*$ satisfies:
\begin{itemize}
\item[i)] $M$ is one-to-one and onto,
\item[ii)] both 
$\langle M(\cdot),x\rangle:X^*\to\re$ and
$\langle M^{-1}(\cdot),x\rangle:X^*\to\re$ 
are quasiconvex for all $x\in X$,
\end{itemize}
then $M$ is affine.
\end{Lem}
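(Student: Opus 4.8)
The plan is to reduce everything to Corollary~\ref{c3}: I will show that $M$ is invertible (which is hypothesis (i)) and maps every segment of $X^*$ \emph{onto} a segment of $X^*$, and then conclude that $M$ is affine, since $X^*$ is a real vector space of dimension at least $2$ (as $\dim X\ge 2$, Hahn--Banach produces two linearly independent continuous functionals, or simply $\dim X^*=\dim X$ in the finite-dimensional case and $\dim X^*=\infty$ otherwise).

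The heart of the matter is a description of segments of $X^*$ purely through the duality pairing, which I would isolate first: for $a,b,v\in X^*$, one has $v\in[a,b]$ if and only if $\min\{\langle a,x\rangle,\langle b,x\rangle\}\le\langle v,x\rangle\le\max\{\langle a,x\rangle,\langle b,x\rangle\}$ for every $x\in X$. The ``only if'' implication is immediate by writing $v=(1-t)a+tb$ with $t\in[0,1]$ and using that $\langle v,x\rangle$ is then a convex combination of $\langle a,x\rangle$ and $\langle b,x\rangle$. For the ``if'' implication, the case $a=b$ gives $v=a$ at once; when $w:=b-a\ne 0$, the displayed inequalities force $\langle v-a,x\rangle=0$ whenever $\langle w,x\rangle=0$, so the linear functional $v-a$ vanishes on $\ker w$. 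The standard algebraic fact that a linear functional annihilating the kernel of a nonzero one is a scalar multiple of it then yields $v-a=\lambda w$ for some $\lambda\in\re$; evaluating the two inequalities at a point $x_0\in X$ with $\langle w,x_0\rangle=1$ forces $\lambda\in[0,1]$, i.e. $v=(1-\lambda)a+\lambda b\in[a,b]$.

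With this in hand I would turn to quasiconvexity. Fix $u_1,u_2\in X^*$ and $s\in[0,1]$. Quasiconvexity of $\langle M(\cdot),x\rangle$ gives $\langle M(su_1+(1-s)u_2),x\rangle\le\max\{\langle M(u_1),x\rangle,\langle M(u_2),x\rangle\}$ for \emph{all} $x\in X$; replacing $x$ by $-x$, which ranges over all of $X$ as well, converts this into the matching lower bound $\langle M(su_1+(1-s)u_2),x\rangle\ge\min\{\langle M(u_1),x\rangle,\langle M(u_2),x\rangle\}$. By the segment description above, $M(su_1+(1-s)u_2)\in[M(u_1),M(u_2)]$, hence $M([u_1,u_2])\subseteq[M(u_1),M(u_2)]$. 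The identical argument applied to $M^{-1}$ (using hypothesis (ii) for $M^{-1}$) gives $M^{-1}([v_1,v_2])\subseteq[M^{-1}(v_1),M^{-1}(v_2)]$ for all $v_1,v_2\in X^*$. Combining the two: if $v\in[M(u_1),M(u_2)]$ then $M^{-1}(v)\in[u_1,u_2]$, so $v=M(M^{-1}(v))\in M([u_1,u_2])$; therefore $M([u_1,u_2])=[M(u_1),M(u_2)]$. Thus $M$ is an invertible map of $X^*$ onto itself carrying segments onto segments, and Corollary~\ref{c3} yields that $M$ is affine.

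I expect the segment-description lemma to be the only real obstacle; once it is established the remainder is essentially formal. Within its proof, the delicate step is the passage from ``$v-a$ annihilates $\ker(b-a)$'' to ``$v-a$ is proportional to $b-a$'', which is exactly where the linearity of the pairing, and not merely its convex behaviour, is used. It is also worth noting that quasiconvexity of $M$ by itself only yields the inclusion $M([u_1,u_2])\subseteq[M(u_1),M(u_2)]$; it is the symmetric hypothesis on $M^{-1}$ that upgrades this to equality, which is the clean form in which Corollary~\ref{c3} applies.
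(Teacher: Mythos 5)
Your proof is correct, and its overall strategy --- show that $M$ is an invertible map carrying segments onto segments, then invoke Corollary~\ref{c3} --- is exactly the paper's. Where you genuinely differ is in how you establish the key step, namely that $M(su_1+(1-s)u_2)$ lies on the segment $[M(u_1),M(u_2)]$. The paper works only with the one-sided bound $\langle M(\bar u),x\rangle\le\max\{\langle M(u_1),x\rangle,\langle M(u_2),x\rangle\}$, deduces from it an inclusion of half-spaces, invokes the convex separation theorem to conclude that $M(u_1)-M(\bar u)=\sigma\,(M(u_1)-M(u_2))$ for some $\sigma>0$, and then needs a second, symmetric run of the argument with $u_1$ and $u_2$ interchanged to force $\sigma\in[0,1]$. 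You instead note that substituting $-x$ for $x$ upgrades quasiconvexity to a two-sided estimate between the pointwise minimum and maximum, and you package the conclusion into a self-contained characterization of segments in $X^*$ through the duality pairing, where collinearity comes from the elementary fact that a linear functional annihilating the kernel of a nonzero one is proportional to it, and the parameter range $[0,1]$ is read off by evaluating at a single point $x_0$ with $\langle b-a,x_0\rangle=1$. This is a genuine simplification: it avoids the separation theorem and the $\sigma,\sigma'$ bookkeeping, at the modest cost of proving the segment-characterization lemma up front. The remaining steps --- using the hypothesis on $M^{-1}$ to turn the inclusion $M([u_1,u_2])\subseteq[M(u_1),M(u_2)]$ into an equality, and checking that $\dim X^*\ge 2$ so that Corollary~\ref{c3} applies --- match the paper's argument exactly.
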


\begin{proof} Take $u_1,u_2\in X^*$, $s\in [0,1]$. We show next that $M(su_1+(1-s)u_2)$
belongs to the segment between $M(u_1)$ and $M(u_2)$.  Define $\bar u =su_1+(1-s)u_2$.
By assumption (ii), 
\begin{equation}\label{e11}
\langle M(\bar u),x\rangle\le\max\{\langle M(u_1),x\rangle,\langle M(u_2),x\rangle\}.
\end{equation}
Note that if $M(u_1)=M(u_2)$ then $u_1=u_2=\bar u$ because $M$ is one-to-one, 
and the result holds trivially.
Assume that $M(u_1)\neq M(u_2)$, and
consider the halfspaces $U=\{x\in X:\langle  M(u_1)-M(u_2),x\rangle \ge 0\}$,
$W=\{x\in X:\langle M(u_1)-M(\bar u),x\rangle\ge 0\}$. Note that if $x\in U$ then the
maximum in the right hand side of \eqref{e11} is attained in the first argument,
in which case 
$\langle M(\bar u),x\rangle\le\langle M(u_1),x\rangle$, i.e., $x$ belongs to $W$.
We
have shown that $U\subset W$. It is an easy consequence of the Convex
Separation Theorem
that $M(u_1)-M(u_2)$ and $M(u_1)-M(\bar u)$ belong to the same halfline, i.e., there
exists $\sigma >0$ such that $M(u_1)-M(\bar u)=\sigma(M(u_1)-M(u_2))$, or equivalently,
\begin{equation}\label{e12}
M(su_1+(1-s)u_2)=\sigma M(u_2)+(1-\sigma)M(u_1).
\end{equation}
Reversing the roles of $u_1, u_2$, we conclude, 
with the same argument, that there exists $\sigma' >0$ such that
$$
M(su_1+(1-s)u_2)=(1-\sigma') M(u_2)+\sigma'M(u_1),
$$
implying that $\sigma=1-\sigma'$ and therefore $\sigma\in [0,1]$. Thus,
\eqref{e12} shows that $M(su_1+(1-s)u_2)$ belongs to the segment between
$M(u_1)$ and $M(u_2)$, and therefore $M$ maps all points in the
segment between $u_1$ and $u_2$ to points between $M(u_1)$ and $M(u_2)$.
The fact that the image of the first segment fills the second one results
from the same argument applied to $M^{-1}$, which enjoys, by assumption, the same 
quasiconvexity property as $M$. We have shown that $M$ maps segments to segments,
and then affineness of $M$ follows from Corollary \ref{c3}.
\end{proof}

\begin{Cor}\label{c4} If $T\in \mathscr{B}$ then $y:X^*\to X^*$, defined by \eqref{ee7}, is affine.
\end{Cor}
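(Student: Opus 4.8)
The plan is to observe that this corollary is an immediate consequence of the two results immediately preceding it. First I would invoke Proposition \ref{p4}: for $T\in\mathscr{B}$, the map $y:X^*\to X^*$ is one-to-one and onto, and both $\langle y(\cdot),x\rangle:X^*\to\re$ and $\langle y^{-1}(\cdot),x\rangle:X^*\to\re$ are quasiconvex for every $x\in X$. These are precisely conditions (i) and (ii) in the hypothesis of Lemma \ref{l2}.

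Then I would simply apply Lemma \ref{l2} with $M=y$ and conclude that $y$ is affine. There is no real obstacle here: the two nontrivial ingredients — the bijectivity and quasiconvexity properties of $y$ (Proposition \ref{p4}), and the passage from those properties to affineness via segment-preservation and the infinite-dimensional version of the fundamental theorem of affine geometry (Lemma \ref{l2}, which in turn rests on Corollary \ref{c3} and Lemma \ref{l1}) — have already been established. The corollary is purely a matter of matching hypotheses and citing the lemma, so the write-up will be one or two sentences.

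\begin{proof}
By Proposition \ref{p4}, the map $y:X^*\to X^*$ is one-to-one and onto, and both $\langle y(\cdot),x\rangle$ and $\langle y^{-1}(\cdot),x\rangle$ are quasiconvex for all $x\in X$. Thus $M=y$ satisfies hypotheses (i) and (ii) of Lemma \ref{l2}, and hence $y$ is affine.
\end{proof}
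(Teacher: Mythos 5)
Your proof is correct and is exactly the paper's own argument: the paper's proof of this corollary is the single sentence ``The result follows from Proposition \ref{p4} and Lemma \ref{l2}.'' You have simply made explicit the matching of hypotheses, which is fine.
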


\begin{proof} The result follows from Proposition \ref{p4} and Lemma \ref{l2}.
\end{proof}

We establish next affineness of $\gamma(\cdot,\cdot)$. We start with an elementary
property of $\gamma(u,\cdot)$.

\begin{proposition}\label{p5} If $T$ belongs to $\mathscr{B}$ then
$\gamma(u,\cdot):\re\to\re$ is strictly increasing, one-to-one and and onto
for all $u\in X^*$.
\end{proposition}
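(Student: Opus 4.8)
The plan is to extract all three properties directly from the order structure together with facts already established about $T$ and $y$. Throughout I would use the identity $T(h_{u,\alpha})=h_{y(u),\gamma(u,\alpha)}$ recorded in \eqref{ee7}.

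First I would establish monotonicity. Fix $u\in X^*$ and take $\alpha\le\delta$; then $h_{u,\alpha}\le h_{u,\delta}$ pointwise (same linear part, smaller additive constant), so the order-preserving property of $T$ gives $h_{y(u),\gamma(u,\alpha)}=T(h_{u,\alpha})\le T(h_{u,\delta})=h_{y(u),\gamma(u,\delta)}$. Since both sides are affine with the same linear part $y(u)$, Corollary \ref{c1} (or Corollary \ref{c2}) forces $\gamma(u,\alpha)\le\gamma(u,\delta)$. Hence $\gamma(u,\cdot)$ is nondecreasing.

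Next, strictness and injectivity. Suppose $\gamma(u,\alpha)=\gamma(u,\delta)$ for some $\alpha,\delta\in\re$. Then $T(h_{u,\alpha})=h_{y(u),\gamma(u,\alpha)}=h_{y(u),\gamma(u,\delta)}=T(h_{u,\delta})$, and since $T$ is one-to-one by Proposition \ref{p1}(i), we get $h_{u,\alpha}=h_{u,\delta}$, i.e.\ $\alpha=\delta$. Combined with the monotonicity just proved, this shows that $\gamma(u,\cdot)$ is strictly increasing, and in particular injective. For surjectivity, given an arbitrary $\beta\in\re$ I would consider the affine function $h_{y(u),\beta}\in\mathscr{A}$. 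Because $\widehat T$ inherits surjectivity from $T$ (using Proposition \ref{p3}), there is a pair $(v,\alpha)\in X^*\times\re$ with $\widehat T(v,\alpha)=h_{y(u),\beta}$, that is $y(v)=y(u)$ and $\gamma(v,\alpha)=\beta$. Since $y$ is one-to-one by Proposition \ref{p4}(i), $v=u$, so $\gamma(u,\alpha)=\beta$, proving that $\gamma(u,\cdot)$ maps onto $\re$.

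I do not expect a serious obstacle here. The only point requiring a little care is the surjectivity step: one must not conflate ``the range of $\widehat T$ is all of $\mathscr{A}$'' with ``$\gamma(u,\cdot)$ is onto $\re$ for each fixed $u$'', and the bridge between the two is exactly the injectivity of $y$, which has already been secured in Proposition \ref{p4}(i). Everything else is an immediate consequence of order preservation and of the uniqueness-of-slope facts in Corollaries \ref{c1}--\ref{c2} together with the injectivity of $T$.
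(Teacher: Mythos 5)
Your proof is correct and follows essentially the same route as the paper: monotonicity from order preservation together with Corollary \ref{c2}, injectivity from injectivity of $T$ (equivalently of $\widehat T$), and surjectivity from surjectivity of $\widehat T$ combined with the injectivity of $y$ from Proposition \ref{p4}(i). The only difference is that you spell out explicitly the step the paper compresses into ``$\widehat T$ is a bijection, hence so is $\gamma(u,\cdot)$,'' which is a welcome clarification but not a different argument.
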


\begin{proof}
Since $T$ is a bijection, we get from Proposition \ref{p3}(ii) that $\widehat T$ is also
a bijection, and hence, in view of Propositions \ref{pp3} and \ref{p4}(i), 
the same holds for  $\gamma(u,\cdot)$. If $\alpha\le\delta$
then $h_{u,\alpha}\le h_{u,\delta}$, which implies, since $T$ is order
preserving, $h_{y(u),\gamma(u,\alpha)}\le h_{y(u),\gamma(u,\delta)}$, so that,
in view of Corollary  \ref{c2}, $\gamma(u,\alpha)\le\gamma(u,\delta)$. Since 
$\gamma(u,\cdot)$ is one-to-one, we conclude that it is strictly increasing.  
\end{proof}

Next we establish affineness of $\gamma(\cdot,\cdot)$.

\begin{proposition}\label{p6}
If $T\in\mathscr{B}$ then $\gamma:X^*\times\re\to\re$, defined by \eqref{ee7}, is affine.
\end{proposition}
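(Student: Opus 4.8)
The plan is to exploit the supremum-preservation property of $T$ (Proposition \ref{p1}(ii)) together with the now-established affineness of $y$ (Corollary \ref{c4}) to pin down $\gamma$. The key observation is that for a fixed linear functional $u$, the family $\{h_{u,\alpha}\}_{\alpha\in\re}$ behaves like a one-parameter family, and $\gamma(u,\cdot)$ is strictly increasing and onto (Proposition \ref{p5}); moreover, mixing different linear parts is controlled through the $\max$ of affine functions exactly as in the proof of Proposition \ref{p4}(ii). So the first step I would take is to show that $\gamma(u,\cdot)$ is affine in its second argument, i.e., $\gamma(u,\alpha)=a(u)\alpha+b(u)$ for functions $a:X^*\to\re_{++}$ and $b:X^*\to\re$. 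To do this, I would take $u_1=u_2=u$ but with different constants $\alpha_1,\alpha_2$ and consider convex combinations $s h_{u,\alpha_1}+(1-s)h_{u,\alpha_2}=h_{u,s\alpha_1+(1-s)\alpha_2}$; applying $T$, using order preservation against $\max\{h_{u,\alpha_1},h_{u,\alpha_2}\}$ and Proposition \ref{p1}(ii), and evaluating appropriately (translating along the affine direction to kill the linear term, which is legitimate since $y(u)$ is independent of the constant), one gets midpoint-concavity of $\gamma(u,\cdot)$; the reverse inequality from $\widehat T^{-1}$, combined with monotonicity from Proposition \ref{p5}, upgrades this to affineness of $\gamma(u,\cdot)$.

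The second step is to handle dependence on $u$. Here I would run the same $\max$-comparison argument used for $y$ in Proposition \ref{p4}(ii), but now keeping track of the additive constants rather than discarding them in the $t\to\infty$ limit. Concretely, from $h_{su_1+(1-s)u_2,\,s\alpha_1+(1-s)\alpha_2}=s h_{u_1,\alpha_1}+(1-s)h_{u_2,\alpha_2}\le\max\{h_{u_1,\alpha_1},h_{u_2,\alpha_2}\}$, applying $T$ and Proposition \ref{p1}(ii) yields, after evaluating at a suitable point and using that $y$ is affine (so $\langle y(su_1+(1-s)u_2),x\rangle=s\langle y(u_1),x\rangle+(1-s)\langle y(u_2),x\rangle$), an inequality of the form
\begin{equation}\label{e:gamma-qc}
\gamma(su_1+(1-s)u_2,\,s\alpha_1+(1-s)\alpha_2)\le\max\{\gamma(u_1,\alpha_1),\gamma(u_2,\alpha_2)\}.
\end{equation}
Applying the same reasoning to $\widehat T^{-1}$ gives the companion inequality on $X^*\times\re$. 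Together these say that a certain bijection of $X^*\times\re$ — essentially $\widehat T$ itself viewed as a map of the product space — satisfies the quasiconvexity hypotheses of Lemma \ref{l2} (with $X^*$ there replaced by $X^*\times\re$, which still has dimension $\ge 2$), whence $\widehat T$ is affine on $X^*\times\re$; since $y$ is already known to be affine, this forces $\gamma$ to be affine as well. Alternatively, one can directly show via \eqref{e:gamma-qc} and its inverse, exactly as in Lemma \ref{l2}, that $\widehat T$ maps segments in $X^*\times\re$ to segments, and invoke Corollary \ref{c3}.

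The main obstacle I anticipate is bookkeeping in the evaluation step: unlike in Proposition \ref{p4}, where dividing by $t$ and letting $t\to\infty$ cleanly isolated the linear parts and eliminated the constants, here I need to retain the constants, so I must evaluate $T$ of an affine function at a concrete point (say $x=0$, giving just the additive constant $\gamma$) while simultaneously ensuring the $\max$ on the right-hand side is resolved correctly — which it is, because $T$ of a pointwise max is the pointwise max by Proposition \ref{p1}(ii), and the max of two affine functions evaluated at $0$ is just the max of the two constants. A secondary subtlety is that Lemma \ref{l2} is stated for maps $X^*\to X^*$; to reuse it verbatim I should note that nothing in its proof used that the domain was a dual space — only that it was a vector space of dimension $\ge 2$ with the stated quasiconvexity/bijectivity properties — so applying it to $\widehat T:X^*\times\re\to X^*\times\re$ is legitimate. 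Once affineness of $\widehat T$ on the product is in hand, reading off affineness of the second coordinate $\gamma$ is immediate.
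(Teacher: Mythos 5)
Your strategy---extract a quasiconvexity bound for $\gamma$ and then reuse Lemma \ref{l2} on the product space $X^*\times\re$---breaks down at both of its steps, and precisely at the points where the paper's proof does something different. For the fixed-$u$ step: with $u_1=u_2=u$ the comparison $h_{u,s\alpha_1+(1-s)\alpha_2}\le\max\{h_{u,\alpha_1},h_{u,\alpha_2}\}=h_{u,\max\{\alpha_1,\alpha_2\}}$ yields, after applying $T$, only $\gamma(u,s\alpha_1+(1-s)\alpha_2)\le\max\{\gamma(u,\alpha_1),\gamma(u,\alpha_2)\}$, which is already a consequence of the monotonicity of $\gamma(u,\cdot)$ from Proposition \ref{p5}; it is not midpoint concavity and carries no new information (the companion inequality from $\widehat T^{-1}$ is equally vacuous). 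Since a strictly increasing bijection of $\re$ need not be affine, this step proves nothing; the paper in fact cannot treat $u_1=u_2$ directly and instead recovers that case at the end by perturbing $u$ to $u+v$, $v\ne 0$, and letting $v\to 0$ along a finite dimensional subspace.

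For the reduction to Lemma \ref{l2}: the quasiconvexity of $(u,\alpha)\mapsto\langle y(u),x\rangle+t\,\gamma(u,\alpha)$ that your max-comparison delivers is only available for pairings $(x,t)\in X\times\re$ with $t\ge0$ ($t>0$ by rescaling $x$, $t=0$ by Proposition \ref{p4}(ii)). Lemma \ref{l2} applied to $\widehat T:X^*\times\re\to X^*\times\re$ would require quasiconvexity of $\langle\widehat T(\cdot),(x,t)\rangle$ for \emph{every} $(x,t)$ in the predual, and its proof genuinely needs this: the halfspace inclusion $U\subset W$ must hold over the whole predual to force colinearity via separation, and knowing it only on the halfspace $\{t\ge0\}$ does not suffice (two halfspaces through the origin can satisfy $U\cap H\subset W\cap H$ without $U\subset W$). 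The missing $t<0$ case, i.e.\ quasiconvexity of $\langle y(\cdot),x\rangle-\gamma(\cdot,\cdot)$, is not accessible from order preservation and is only true a posteriori, once affineness is known---so the argument is circular. The key device your plan lacks is the tie-point evaluation: when $u_1\neq u_2$ (hence $y(u_1)\neq y(u_2)$) one chooses $\bar x$ with $\langle y(u_1)-y(u_2),\bar x\rangle=\gamma(u_2,\alpha_2)-\gamma(u_1,\alpha_1)$, so that the two terms inside the maximum coincide at $\bar x$; evaluating there and using affineness of $y$ converts the maximum into the convex combination and yields genuine \emph{convexity} of $\gamma$, after which a separate argument (using surjectivity of $\gamma(\bar u,\cdot)$ and a tie point in the domain) rules out strict inequality. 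Without that device you remain stuck at quasiconvexity, which is too weak to conclude.
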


\begin{proof}
We start by proving that $\gamma$ is convex. Take $u_1,u_2\in X^*$,  
$\alpha_1, \alpha_2\in\re$
and $s\in [0,1]$. We consider first the case in which $u_1\neq u_2$. 
Define $\bar u=su_1+(1-s)u_2, \bar\alpha=s\alpha_1+(1-s)\alpha_2$.
Observe that
\begin{equation}\label{e13}
sh_{u_1,\alpha_1}+(1-s)h_{u_2,\alpha_2}\le\max\left\{h_{u_1,\alpha_1},h_{u_2,\alpha_2}\right\}.
\end{equation}
As in the proof of Proposition \ref{p4}(ii), we get from \eqref{e13} that
$T\left(h_{\bar u,\bar\alpha}\right)\le\max\left\{T\left(h_{u_1,\alpha_1}\right),
T\left(h_{u_2,\alpha_2}\right)\right\}$,
so that, for all $x\in X$,
$$
\langle y(su_1+(1-s)u_2),x\rangle+\gamma(\bar u,\bar\alpha)=
s\langle y(u_1),x\rangle+(1-s)\langle y(u_2),x\rangle+\gamma(\bar u,\bar\alpha)
\le
$$
\begin{equation}\label{e14}
\max\{\langle y(u_1),x\rangle
+\gamma(u_1,\alpha_1),\langle y(u_2),x\rangle +\gamma(u_2,\alpha_2)\},
\end{equation}
using Corollary \ref{c4} in the equality. Since $u_1\neq u_2$, and $y$ is one-to-one
by Proposition \ref{p4}(i), we have that $y(u_1)\neq y(u_2)$, and hence there exists 
$\bar x\in X$ such that
$$
\langle y(u_1),\bar x\rangle+\gamma(u_1,\alpha_1)=
\langle y(u_2),\bar x\rangle+\gamma(u_2,\alpha_2),
$$
so that
\begin{equation}\label{e15}
\langle y(u_1)-y(u_2),\bar x\rangle=\gamma(u_2,\alpha_2)-\gamma(u_1,\alpha_1).
\end{equation}
Replacing \eqref{e15} in \eqref{e14}, and noting that for $x=\bar x$ there is a tie
between both arguments for the maximum in the rightmost expression of \eqref{e14},
we obtain, after some elementary algebra,
\begin{equation}\label{e16}
\langle y(u_2),\bar x\rangle +s\gamma(u_2,\alpha_2)-s\gamma(u_1,\alpha_1)+
\gamma(\bar u,\bar\alpha)\le\langle y(u_2),\bar x\rangle+\gamma(u_2,\alpha_2).
\end{equation}
Rearranging terms in \eqref{e16} and using the definitions of $\bar u, \bar\alpha$, we get
\begin{equation}\label{e17}
\gamma(s(u_1,\alpha_1)+(1-s)(u_2,\alpha_2))\le s\gamma(u_1,\alpha_1)+(1-s)\gamma(u_2,\alpha_2),
\end{equation}
establishing joint convexity of $\gamma$ in its two arguments. In order to obtain
affineness of $\gamma$, it suffices to prove that \eqref{e17} holds indeed with equality.
Suppose that, on the contrary, there exist $u_1,u_2\in X^*$, $u_1\neq u_2$, $\alpha_1,\alpha_2\in\re$ and
$\bar s\in [0,1]$ such that \eqref{e17} holds with strict inequality, and take $\theta\in\re$
such that
\begin{equation}\label{e18}
\gamma(\bar s(u_1,\alpha_1)+(1-\bar s)(u_2,\alpha_2))
<\theta<\bar s\gamma(u_1,\alpha_1)+(1-\bar s)\gamma(u_2,\alpha_2).
\end{equation}
Define now $\bar\gamma= 
\gamma(\bar s(u_1,\alpha_1)+(1-\bar s)(u_2,\alpha_2))$,
$\gamma_1=\gamma(u_1,\alpha_1)$, $\gamma_2=\gamma(u_2,\alpha_2)$.
Since $\gamma(\bar u,\cdot)$ is onto by Proposition \ref{p5}, there exists
$\eta\in\re$ such that $\theta=\gamma(\bar u,\eta)$, and hence it follows from
\eqref{e18} that
$$
h_{y(\bar u),\bar\gamma}< h_{y(\bar u),\gamma(\bar u,\eta)}<h_{y(\bar u),
\bar s\gamma_1+(1-\bar s)\gamma_2}\le\max\left\{h_{y(u_1),\gamma_1},
h_{y(u_2),\gamma_2}\right\}=
$$
\begin{equation}\label{e19}
\max\{\widehat T(u_1,\alpha_1),\widehat T(u_2,\alpha_2)\}=T\left(\max\left\{
h_{u_1,\alpha_1},h_{u_2,\alpha_2}\right\}\right).
\end{equation}
Since $h_{y(\bar u),\bar\gamma}=T\left(h_{\bar u,\bar\alpha}\right)$ and
$h_{y(\bar u),\gamma(\bar u,\eta)}=T\left(h_{\bar u,\eta}\right)$, we can rewrite \eqref{e19}
as
$$
T\left(h_{\bar u,\bar\alpha}\right)<T\left(h_{\bar u,\eta}\right)\le
T\left(\max\left\{h_{u_1,\alpha_1},h_{u_2,\alpha_2}\right\}\right),
$$
which gives, since $T$ is fully order preserving and one-to-one,
$$
h_{\bar u,\bar\alpha}<h_{\bar u,\eta}\le
\max\left\{h_{u_1,\alpha_1},h_{u_2,\alpha_2}\right\},
$$
and therefore, for all $x\in X$,
\begin{equation}\label{e20}
\langle\bar u,x\rangle+\bar\alpha <\langle\bar u,x\rangle+\eta\le\max\{
\langle u_1,x\rangle+\alpha_1,\langle u_2,x\rangle+\alpha_2\}.
\end{equation}
Since $u_1\neq u_2$, there exists $\bar x$ such that
 $\langle u_1-u_2,\bar x\rangle=\alpha_2-\alpha_1$,
in which case
$$
\langle\bar u,\bar x\rangle+\bar\alpha=\langle u_1,\bar x\rangle +\alpha_1=
\langle u_2,\bar x\rangle+\alpha_2,
$$
which contradicts \eqref{e20}, and establishes that 
\begin{equation}\label{e21}
\gamma(s(u_1,\alpha_1)+(1-s)(u_2,\alpha_2))= s\gamma(u_1,\alpha_1)+(1-s)\gamma(u_2,\alpha_2),
\end{equation}
completing the proof of affineness of $\gamma$ when $u_1\neq u_2$.
For the remaining case (i.e., $u_1=u_2$), 
we must prove affineness of $\gamma(u,\cdot)$ for a fixed $u$. 
Take now $u,v\in X^*$  with $v\neq 0$, so that $u\neq u+v$ and
\eqref{e21} holds with $u_1=u+v$, $u_2=u$. 
Note that 
$\gamma(\cdot,\alpha)$ is affine for any fixed $\alpha$, 
because for a fixed second argument 
\eqref{e21} holds trivially when $u_1=u_2$. 
So, the restrictions of $\gamma(\cdot,\bar\alpha)$ and $\gamma(\cdot,\alpha_1)$ 
to the subspace of $X$ spanned by $u$ and $v$, being affine, are continuous, and
taking limits with $v\to 0$ in \eqref{e21}, we get affineness of
$\gamma(u,\cdot)$ for all $u\in X^*$,
completing the proof.
\end{proof}

In the finite dimensional case, Proposition \ref{p6} would suffice for obtaining
an explicit form of $\widehat T$, but in our setting we still have to prove continuity
of $\widehat T$, which is not immediate from its affineness.
For proving continuity, we will need the following elementary result.

\begin{proposition}
\label{pr:el} Take $T\in \mathscr{B}$.
If $f\in\mathscr{C}(X)$ is finite everywhere, then $T(f)$
is also finite everywhere.
\end{proposition}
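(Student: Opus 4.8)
The plan is to give a ``soft'' argument that bypasses the affine structure of $\widehat T$ entirely and rests only on Proposition \ref{p1}(ii) together with the fact that both $T$ and $T^{-1}$ map $\mathscr{C}(X)$ into itself. First I would record the following lemma: for any $\phi,\psi\in\mathscr{C}(X)$ one has $\mathrm{dom}\,\phi\cap\mathrm{dom}\,\psi\neq\emptyset$ if and only if $\mathrm{dom}\,T(\phi)\cap\mathrm{dom}\,T(\psi)\neq\emptyset$. For the forward implication, when $\mathrm{dom}\,\phi\cap\mathrm{dom}\,\psi\neq\emptyset$ the function $\max\{\phi,\psi\}$ is convex, lower semicontinuous, finite at a common domain point, and never $-\infty$; hence it belongs to $\mathscr{C}(X)$, and Proposition \ref{p1}(ii), applied to the two-element family $\{\phi,\psi\}$, gives $T(\max\{\phi,\psi\})=\max\{T(\phi),T(\psi)\}$. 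Since the left-hand side lies in $\mathscr{C}(X)$ it is proper, so $\max\{T(\phi),T(\psi)\}$ is finite somewhere, i.e.\ $\mathrm{dom}\,T(\phi)\cap\mathrm{dom}\,T(\psi)\neq\emptyset$. The reverse implication is the forward one applied to $T^{-1}\in\mathscr{B}$ and to $T(\phi),T(\psi)$.

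Next, I would fix an arbitrary $y_0\in X$ and aim to show $y_0\in\mathrm{dom}\,T(f)$. Let $R(x)=2y_0-x$, an affine homeomorphism of $X$ (indeed an involution), and put $\psi:=T(f)\circ R$. Precomposition with an affine homeomorphism preserves convexity, lower semicontinuity and properness, so $\psi\in\mathscr{C}(X)$, and $\mathrm{dom}\,\psi=2y_0-\mathrm{dom}\,T(f)$. Because $T^{-1}$ maps $\mathscr{C}(X)$ into itself, $T^{-1}(\psi)\in\mathscr{C}(X)$ is proper, hence $\mathrm{dom}\,T^{-1}(\psi)\neq\emptyset$; and since $f$ is finite everywhere, $\mathrm{dom}\,f=X$, so $\mathrm{dom}\,f\cap\mathrm{dom}\,T^{-1}(\psi)\neq\emptyset$. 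Applying the lemma with the pair $f$ and $T^{-1}(\psi)$ (note $T(T^{-1}(\psi))=\psi$) yields $\mathrm{dom}\,T(f)\cap\mathrm{dom}\,\psi\neq\emptyset$.

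Finally, choosing $z\in\mathrm{dom}\,T(f)\cap\mathrm{dom}\,\psi$ gives $T(f)(z)<\infty$ and $T(f)(2y_0-z)=\psi(z)<\infty$, so convexity of $T(f)$ yields
$$
T(f)(y_0)=T(f)\!\left(\tfrac12 z+\tfrac12(2y_0-z)\right)\le\tfrac12\,T(f)(z)+\tfrac12\,T(f)(2y_0-z)<\infty .
$$
As $y_0$ was arbitrary and $T(f)$, being proper convex and lower semicontinuous, never equals $-\infty$, this shows $T(f)$ is finite everywhere. I do not expect any serious obstacle here: the only points needing care are the routine verifications that $\max\{\phi,\psi\}$ and $T(f)\circ R$ actually belong to $\mathscr{C}(X)$ (in particular that they are proper), and the real ``idea'' is simply to translate finiteness at a point into the domain-intersection property, which $T$ manifestly respects.
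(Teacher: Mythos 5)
Your proof is correct: the domain--intersection lemma is a valid consequence of Proposition \ref{p1}(ii) applied to a two-element family (properness of $\max\{\phi,\psi\}$ does follow from the existence of a common domain point, and the reverse implication correctly invokes $T^{-1}\in\mathscr{B}$), the function $T(f)\circ R$ does lie in $\mathscr{C}(X)$, and the final midpoint-convexity step is sound. But the route is genuinely different from the paper's. The paper argues by contradiction in three lines: if $T(f)(x_0)=+\infty$, surjectivity of $T$ produces $f_0$ with $T(f_0)=g_0$, the indicator function of $\{x_0\}$; since $f$ is finite everywhere, $\max\{f,f_0\}$ is proper, yet Proposition \ref{p1}(ii) forces $T(\max\{f,f_0\})=\max\{T(f),g_0\}\equiv+\infty$, contradicting $T(\mathscr{C}(X))\subset\mathscr{C}(X)$. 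Both arguments rest on exactly the same two ingredients (surjectivity of $T$ and the sup-preservation property for a pair of functions), so the difference is one of packaging: the paper's device is to pull back a very degenerate test function (an indicator of a singleton) and let the hypothesis ``$f$ finite everywhere'' kill the contradiction at a single stroke, whereas you avoid exhibiting a preimage of any specific function and instead isolate a reusable structural fact --- that $T$ preserves the relation $\mathrm{dom}\,\phi\cap\mathrm{dom}\,\psi\neq\emptyset$ --- and then exploit the reflection $x\mapsto 2y_0-x$ to propagate finiteness to every point via convexity. Your version is longer but arguably more robust (the intermediate lemma has independent interest and does not require indicator functions to lie in the class under consideration); the paper's is shorter and more direct.
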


\begin{proof}
Consider $T\in\mathscr{B}$ and a finite everywhere $f\in \mathscr{C}(X)$. 
Suppose that $T(f)(x_0)=\infty$ for some $x_0\in X$ and define 
$g_0:X\to\re\cup\{\infty\}$ as
$$
g_0(x)=
\begin{cases}
0& {\rm if}\,\,\,x=x_0\\
\infty&\text{otherwise.}
\end{cases}
$$
Since $T$ is onto, there exists $f_0\in\mathscr{C}(X)$ such that $T(f_0)=g_0$. 
Since $f$ is finite
everywhere, $\widetilde f=\max\{f,f_0\}$ belongs to $\mathscr{C}(X)$. Therefore,
using Proposition \ref{p1}(ii),
$$
T\left(\widetilde f\right)=\max\{T(f),T(f_0)\}=\max\{T(f),g_0\},
$$ 
so that $T\left(\widetilde f\right)(x)=\infty$ for all $x\in X$.  Hence, we
have that $\widetilde f\in\mathscr{C}(X)$ and $T\left(\widetilde f\right)\notin\mathscr{C}(X)$, in
contradiction with our assumptions on $T$.
\end{proof}

\begin{proposition}\label{p7}
If $T\in\mathscr{B}$ then both $y:X^*\to X^*$ and $\gamma:X^*\times\re\to\re$ are continuous.
\end{proposition}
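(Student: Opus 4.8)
The plan is to inject the missing topological information through a single well-chosen test function. Since $y$ and $\gamma$ are already known to be affine (Corollary \ref{c4} and Proposition \ref{p6}), the only real content of the statement is that their linear parts are bounded, and in infinite dimensions this does not follow from affineness alone. The device that supplies boundedness is the combination of Proposition \ref{pr:el} with the classical fact that a lower semicontinuous convex function which is finite everywhere on a Banach space is automatically continuous, hence bounded on bounded sets.

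Concretely, first I would take $g=\|\cdot\|\in\mathscr{C}(X)$. One checks directly that $A(g)=\{h_{u,\delta}:\|u\|_*\le 1,\ \delta\le 0\}$, so by Proposition \ref{p1}(ii) and \eqref{ee7}, using that $\gamma(u,\cdot)$ is increasing (Proposition \ref{p5}),
\[
T(g)(x)=\sup_{\|u\|_*\le 1}\bigl(\langle y(u),x\rangle+\gamma(u,0)\bigr)\qquad(x\in X).
\]
By Proposition \ref{pr:el}, $T(g)$ is finite everywhere; being lower semicontinuous and convex it is therefore continuous, so there is $M\in\re$ with $T(g)(x)\le M$ for all $\|x\|\le 1$. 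Hence $\langle y(u),x\rangle+\gamma(u,0)\le M$ whenever $\|u\|_*\le 1$ and $\|x\|\le 1$; taking the supremum over $\|x\|\le 1$ gives
\[
\|y(u)\|_*+\gamma(u,0)\le M\qquad\text{for all }\|u\|_*\le 1.
\]

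Next I would turn this into continuity. Write $y(u)=Cu+y(0)$ with $C:X^*\to X^*$ linear, and $\gamma(u,\alpha)=\ell(u)+c\alpha+b$ with $\ell:X^*\to\re$ linear, $c\in\re$ and $b=\gamma(0,0)$ (possible since $y,\gamma$ are affine). From the last display, $\ell(u)=\gamma(u,0)-b\le M-b$ for $\|u\|_*\le 1$; replacing $u$ by $-u$ gives the reverse bound, and since $M\ge T(g)(0)\ge\gamma(0,0)=b$ we get $|\ell(u)|\le M-b$ on the unit ball of $X^*$. Thus $\ell\in X^{**}$, so $\gamma$ is continuous. Plugging this back, $\|y(u)\|_*\le M-\ell(u)-b\le 2(M-b)$ for $\|u\|_*\le 1$, hence $\|Cu\|_*\le\|y(u)\|_*+\|y(0)\|_*$ is bounded on the unit ball, $C$ is a bounded operator, and $y$ is continuous.

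The main obstacle is conceptual rather than computational: once one realizes that affineness must be supplemented by a genuinely topological input, everything is short. The one point requiring care is choosing a test function whose image under $T$ is guaranteed to remain finite everywhere — here the norm works, via Proposition \ref{pr:el} — and then exploiting the identity $\sup_{\|x\|\le 1}\langle y(u),x\rangle=\|y(u)\|_*$ together with the symmetry $u\leftrightarrow-u$ to extract two-sided estimates on both $y$ and $\gamma$.
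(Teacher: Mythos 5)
Your strategy coincides with the paper's up to the identity $T(g)(x)=\sup_{\lV u\rV\le 1}\{\langle y(u),x\rangle+\gamma(u,0)\}$ for $g=\lV\cdot\rV$, and your treatment of $\gamma$ is essentially the paper's: finiteness of $T(g)$ at the single point $0$ plus the symmetry $u\leftrightarrow -u$ of the dual ball. The gap is in how you bound the family $\{y(u):\lV u\rV\le 1\}$. You assert that $T(g)$, being finite everywhere, lower semicontinuous and convex, is continuous and \emph{therefore} satisfies $T(g)(x)\le M$ for all $\lV x\rV\le 1$. The first implication is a correct Baire-category fact, but the second is false in infinite dimensions: a continuous convex function on a Banach space need not be bounded on bounded sets. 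For example, on $\ell^2$ the function $f(x)=\sum_{n\ge 2}n^2\max\{0,\lv x_n\rv-1+1/n\}$ is convex, finite everywhere and continuous (near each point all but finitely many terms vanish on a whole neighborhood), yet $f(e_k)=k$ on the unit sphere. Since the uniform bound $\sup_{\lV u\rV\le1}\lV y(u)\rV<\infty$ that you extract from that display is essentially the boundedness of the linear part of $y$, i.e.\ the crux of the proposition, this step cannot be waved through.

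The repair is local: continuity of $T(g)$ at $0$ does give $T(g)\le M$ on some ball $\{\lV x\rV\le r\}$ with $r>0$, and rerunning your computation with $\sup_{\lV x\rV\le r}\langle y(u),x\rangle=r\lV y(u)\rV$ yields $r\lV y(u)\rV\le M-\gamma(u,0)\le M-b+\lV\ell\rV$ for all $\lV u\rV\le 1$, which bounds $C$ as desired. With that correction your argument becomes a genuine alternative to the paper's: the paper fixes $x\in X$, deduces only the pointwise bound $\sup_{\lV u\rV\le1}\lv\langle Du,x\rangle\rv<\infty$, and then invokes the Banach--Steinhaus uniform boundedness principle, whereas you would import the Baire-category input through the theorem that a finite, lower semicontinuous convex function on a Banach space is continuous (hence bounded above on some ball). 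Both routes are legitimate and of comparable length, but in your version that convexity theorem should be cited explicitly, since it is precisely the nontrivial topological ingredient replacing the uniform boundedness principle.
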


\begin{proof}
First we prove that $\gamma$ is continuous. Let $g=\lV \cdot\rV\in \mathscr{C}(X)$ and define 
$\bar g=T(g)$.  Let $B$ the the unit ball in $X^*$, i.e. $B=\{u\in X^*: \lV u\rV\le 1\}$. Since
$\lV x\rV=\sup_{u\in B}\left\{h_{u,0}(x)=\langle u,x\rangle\right\}$,
we have
\begin{equation}\label{ee22}
\bar g(x)=T(g)(x)=\sup_{u\in B}\left\{\left(T(h_{u,0})\right)(x)\right\}
=\sup_{u\in B}\{ h_{\widehat T (u,0)}(x)\}=
\sup_{u\in B}\{\langle y(u),x\rangle+\gamma(u,0)\}.
\end{equation}
Therefore,
\begin{equation}\label{ea22}
\bar g(0)=\sup_{u\in B}\{\gamma(u,0)\}.
\end{equation} 
Since $\gamma:X^*\times\re\to\re$ is affine by Proposition \ref{p6}, 
it can be written as $\gamma(u,\alpha)=\widetilde\gamma(u)+
\widehat\gamma(\alpha)+\mu$
where 
$\mu=\gamma(0,0)\in\re$  and 
$\widetilde\gamma:X^*\to\re,\widehat\gamma:\re\to\re$ are linear functionals.   
Using this representation of $\gamma$ and \eqref{ea22}, we conclude
that
$\sup_{u\in B}\{\widetilde\gamma(u)\}+\mu=\bar g(0)<\infty$,
where the inequality follows from Proposition \ref{pr:el} and the definition
of $g$. Therefore
$$
\lv\widetilde\gamma(u)\rv=\max\{\widetilde\gamma(u),\widetilde\gamma(-u)\}
\le\bar g(0)-\mu\in\re,
$$
for all $u\in B$, and hence
$\widetilde\gamma$ is a bounded linear functional with $\lV\widetilde\gamma\rV\le
\bar g(0)-\mu$, so that $\widetilde\gamma$ is continuous. 
Continuity of $\widehat\gamma$ follows trivially from its
linearity and the fact that its domain is finite dimensional. Hence
$\gamma$ is continuous.

For proving continuity of $y$, observe that $y$ can be written as $y(u)=Du+w$ 
where $D:X^*\to X^*$ is a linear operator 
and $w=y(0)\in X^*$. Combining this representation of $y$ with
\eqref{ee22} we have
\begin{equation}\label{eb22}
\sup_{u\in B}\{\langle Du+w,x\rangle+\langle\widetilde\gamma,u\rangle+\mu
\}=\bar g(x)<\infty
\end{equation}
for all $x\in X$, where the inequality follows from Proposition \ref{pr:el} and
the definition of $g$.
Take $u\in B$ and
conclude from \eqref{eb22} that
$$
\langle Du,x\rangle\le\bar g(x)-\langle w,x\rangle
-\widetilde\gamma(u)-\mu
\le \bar g(x)-\langle w,x\rangle
+\lV\widetilde\gamma\rV\,\lV u\rV-\mu
\le\bar g(x)-\langle w,x\rangle
+\lV\widetilde\gamma\rV-\mu.
$$
Therefore
$$
\sup_{u\in B}\{\lv\langle Du,x\rangle\rv\}=
\sup_{u\in B}\{\langle Du,x\rangle\}<\infty
$$
for all $x\in X$.
This means that the family of bounded linear operators
$\{Du\;|\;u\in B\}$ is pointwise bounded. Applying Banach-Steinhaus
uniform boundedness principle (see, e.g., \cite{Bre}, p. 32), 
we get that this family is uniformly
bounded, i.e., there exists $\nu<\infty$ such that
$\lV Du\rV\le\nu$ for all $u\in B$.
It follows that
$\lV D\rV\le\nu$, and therefore 
$D$ is bounded and linear, hence continuous. 
We conclude that the operator $y$ defined as $y(u)=Du+w$ is continuous, 
completing the proof.
\end{proof}

Now we present a more explicit formula for the operator $\widehat T$.

\begin{Cor}\label{c5} 
If $T\in\mathscr{B}$, then there exist 
$d\in X^{**}$, $w\in X^*$, $\beta\in\re$,  $\tau\in\re_{++}$ and a  continuous automorphism 
$D$ of $X^*$ such that $\widehat T(u,\alpha)=(Du+w,\langle d,u\rangle+\tau\alpha+\beta)$ for all
$u\in X^*$ and all $\alpha\in\re$.
\end{Cor}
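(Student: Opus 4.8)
The plan is to assemble the structural facts about $y$ and $\gamma$ that have already been established. First I would use Corollary \ref{c4}: since $y:X^*\to X^*$ is affine, it can be written as $y(u)=Du+w$ with $D:X^*\to X^*$ linear and $w:=y(0)\in X^*$. Proposition \ref{p4}(i) tells us that $y$ is a bijection, and this transfers to $D$: if $Du_1=Du_2$ then $y(u_1)=y(u_2)$ forces $u_1=u_2$, and given $v\in X^*$ the equation $Du=v-w$ is solvable because $y$ is onto. By Proposition \ref{p7}, $y$, and hence $D$, is continuous. So $D$ is a continuous linear bijection of the Banach space $X^*$; invoking the bounded inverse theorem (a consequence of the open mapping theorem, see e.g. \cite{Bre}), $D^{-1}$ is continuous as well, so $D$ is a continuous automorphism of $X^*$, as required.

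Next I would decompose $\gamma$. By Proposition \ref{p6} it is affine on $X^*\times\re$, so exactly as in the proof of Proposition \ref{p7} one writes $\gamma(u,\alpha)=\widetilde\gamma(u)+\widehat\gamma(\alpha)+\mu$ with $\widetilde\gamma:X^*\to\re$, $\widehat\gamma:\re\to\re$ linear and $\mu:=\gamma(0,0)\in\re$. Proposition \ref{p7} also supplies continuity of $\widetilde\gamma$, i.e. $\widetilde\gamma\in X^{**}$; I would set $d:=\widetilde\gamma$, so that $\widetilde\gamma(u)=\langle d,u\rangle$. A linear functional on $\re$ is multiplication by a constant, so $\widehat\gamma(\alpha)=\tau\alpha$ with $\tau:=\widehat\gamma(1)\in\re$. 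To see that $\tau>0$, note that by Proposition \ref{p5} the function $\alpha\mapsto\gamma(u,\alpha)=\langle d,u\rangle+\tau\alpha+\mu$ is strictly increasing, which is possible only if $\tau\in\re_{++}$.

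Setting $\beta:=\mu$ and recalling the notation \eqref{ee7}, this would give $\widehat T(u,\alpha)=(y(u),\gamma(u,\alpha))=(Du+w,\langle d,u\rangle+\tau\alpha+\beta)$ for all $u\in X^*$ and all $\alpha\in\re$, which is the claimed formula. I do not expect any serious obstacle here, since everything reduces to bookkeeping over the earlier results; the only step that genuinely uses a nontrivial (though classical) theorem is the continuity of $D^{-1}$, where completeness of $X^*$ and the open mapping theorem are essential --- this is, once again, one of the points at which the infinite-dimensional case differs from the finite-dimensional one treated in \cite{AM2}.
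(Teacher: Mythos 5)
Your proposal is correct and follows essentially the same route as the paper: it assembles Corollary \ref{c4} and Propositions \ref{p6} and \ref{p7} for the affine-plus-continuous decompositions of $y$ and $\gamma$, and Propositions \ref{p4}(i) and \ref{p5} for $D$ being an automorphism and $\tau>0$, which is exactly what the paper's (much terser) proof does. Your extra remark that continuity of $D^{-1}$ follows from the bounded inverse theorem is a welcome clarification; the paper defers that point and invokes the Closed Graph Theorem only later, in the proof of Theorem \ref{t1}.
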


\begin{proof} Being affine and continuous by virtue of Corollary \ref{c4} and Propositions  
\ref{p6} and \ref{p7}\, we have that $y(u)=Du+w$, 
$\gamma(u,\alpha)=\langle d,u\rangle+\tau\alpha+\beta$, with $D,d,w,\tau$ and $\beta$
as in the statement of the corollary. The facts that $D$
is an automorphism of $X^*$ and that $\tau$ is positive follow from Propositions
\ref{p4}(i) and \ref{p5}, which establish that $y$ is a bijection and
that $\gamma(u,\cdot)$ is increasing,
respectively.
\end{proof} 

We recall now two basic properties of the Fenchel biconjugate, which will be needed
in the proof of our main result.

\begin{proposition}\label{p8}
\begin{itemize}
\item[i)] For all $f\in \mathscr{C}(X), f^{**}_{|X}=f$, where $f^{**}_{|X}$ denotes the
restriction of $f^{**}$ to $X\subset X^{**}$.
\item[ii)] For all $f\in\mathscr{C}(X)$ and all $a\in X^{**}$, 
$f^{**}(a)=\sup_{h_{u,\alpha}\in A(f)}\{\langle a,u\rangle +\alpha\}$.
\end{itemize}
\end{proposition}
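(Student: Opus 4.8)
The plan is to derive both statements directly from the definition of the Fenchel conjugate together with the classical fact, already quoted in the excerpt (see \cite{VaT}, p.~90), that $f=\sup_{h\in A(f)}h$ for every $f\in\mathscr{C}(X)$; in particular $A(f)\neq\emptyset$. It is most convenient to prove (ii) first and then obtain (i) as the special case $a=Jx$, where $J\colon X\to X^{**}$ denotes the canonical embedding, normalized so that $\langle Jx,u\rangle=\langle u,x\rangle$ for all $u\in X^*$.

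For (ii), the key observation is a reformulation of membership in $A(f)$ in terms of $f^*$: for $(u,\alpha)\in X^*\times\re$ we have $h_{u,\alpha}\in A(f)$ if and only if $\langle u,x\rangle+\alpha\le f(x)$ for all $x\in X$, if and only if $\alpha\le\inf_{x\in X}\{f(x)-\langle u,x\rangle\}=-f^*(u)$. Consequently,
\[
\sup_{h_{u,\alpha}\in A(f)}\{\langle a,u\rangle+\alpha\}
=\sup_{u\in X^*}\ \sup_{\alpha\le -f^*(u)}\{\langle a,u\rangle+\alpha\}
=\sup_{u\in X^*}\{\langle a,u\rangle-f^*(u)\}=f^{**}(a),
\]
where, in the middle step, for $u\in\mathrm{dom}(f^*)$ the inner supremum equals $\langle a,u\rangle-f^*(u)$, while for $u\notin\mathrm{dom}(f^*)$ it is a supremum over the empty set and hence equal to $-\infty$, so that such $u$ do not affect the outer supremum; note also that the outer supremum is over a nonempty set, since $A(f)\neq\emptyset$ forces $\mathrm{dom}(f^*)\neq\emptyset$. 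This is exactly the assertion of (ii).

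For (i), take $a=Jx$ with $x\in X$. Using $\langle Jx,u\rangle=\langle u,x\rangle$ together with part (ii),
\[
f^{**}_{|X}(x)=f^{**}(Jx)=\sup_{h_{u,\alpha}\in A(f)}\{\langle u,x\rangle+\alpha\}=\sup_{h\in A(f)}h(x)=f(x),
\]
the last equality being the quoted representation of $f$ as the supremum of its continuous affine minorants. I expect no genuine obstacle in this argument: the only real analytic content, namely the Hahn--Banach separation underlying the identity $f=\sup_{h\in A(f)}h$, is imported as a known result, and what remains is bookkeeping with the duality coupling and the canonical embedding $J$.
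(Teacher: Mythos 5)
Your argument is correct. For part (ii) it is essentially the paper's own proof: both rest on the observation that $h_{u,\alpha}\in A(f)$ if and only if $\alpha\le -f^*(u)$, and then identify $\sup_{h_{u,\alpha}\in A(f)}\{\langle a,u\rangle+\alpha\}$ with $\sup_{u\in X^*}\{\langle a,u\rangle-f^*(u)\}=f^{**}(a)$; your explicit handling of the iterated supremum and of $u\notin\mathrm{dom}(f^*)$ is just a slightly more careful writing of the same step. The only real divergence is in part (i): the paper simply cites the Fenchel--Moreau theorem from the literature, whereas you derive it from part (ii) together with the Hahn--Banach fact $f=\sup_{h\in A(f)}h$ (which is the result quoted from \cite{VaT}, p.~90, and is independent of Fenchel--Moreau, so there is no circularity). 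The paper itself remarks, parenthetically after the proposition, that exactly this deduction is possible; you have carried it out, which makes your version self-contained at no extra cost. Both routes are sound, and the logical order you chose --- (ii) first, then (i) as the special case $a=Jx$ --- is the natural one.
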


\begin{proof}
A proof of item (i), sometimes called Fenchel-Moreau Theorem, can be found, 
for instance, in
Theorem 1.11 of \cite{Bre}.
 
We proceed to prove item (ii).
Observe that, by definition of $f^{**}$, 
$f^{**}(a)=\sup_{u\in X^*}\{\langle a,u\rangle-f^*(u)\}$.
Also, by definition of $f^*$, $f^*(u)=\sup_{x\in X}\{\langle u,x\rangle - f(x)\}$,
so that $h_{u,-f^*(u)}(x)=\langle u,x\rangle -f^*(u)\le f(x)$ for all $x\in X$, and
therefore $h_{u,-f^*(u)}$ belongs to $A(f)$ for all $u\in X^*$ (we may assume $f^*(u)<+\infty$
when taking the supremum over $u\in X^*$, in the formula of $f^{**}(a)$).
Next, we prove that if $h_{u,\alpha}$ belongs to $A(f)$ then $\alpha\le -f^*(u)$.
Indeed, if $h_{u,\alpha}\in A(f)$ then, for all $x\in X$,
\begin{equation}\label{e26}
\alpha\le 
\inf_{x\in X}\{f(x)-\langle u,x\rangle\}=
\inf_{x\in X}\{-(\langle u,x\rangle - f(x))\}=
-\sup_{x\in X}\{\langle u,x\rangle - f(x)\}=
-f^*(u).
\end{equation}  
In view of \eqref{e26} and the fact that $h_{u,-f^*(u)}\in A(f)$ for all $u\in X^*$,
we have 
$$
\sup_{h_{u,\alpha}\in A(f)}\{\langle a,u\rangle +\alpha\}= 
\sup_{h_{u,-f^*(u)}\in A(f)}\{\langle a,u\rangle -f^*(u)\}= 
\sup_{u\in X^*}\{\langle a,u\rangle -f^*(u)\}= f^{**}(a).
$$
\end{proof}

We mention, parenthetically, that the Fenchel-Moreau Theorem can be easily deduced
from Proposition \ref{p8}(ii).
Next we present our first main result, characterizing the operators $T\in\mathscr{B}$

\begin{Thm}\label {t1}
An operator $T:\mathscr{C}(X)\to\mathscr{C}(X)$ is fully order preserving 
if and only if there exist  
$c\in X$, $w\in X^*$, $\beta\in\re$,  $\tau\in\re_{++}$ and a continuous automorphism 
$E$ of $X$ such that 
\begin{equation}\label{e23}
T(f)(x) =\tau f(Ex+c) +\langle w,x\rangle+\beta,
\end{equation}
for all $f\in\mathscr{C}(X)$ and all $x\in X$.
\end{Thm}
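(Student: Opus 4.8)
The plan is to dispatch the ``if'' direction by a direct verification and to obtain the ``only if'' direction by feeding Corollary \ref{c5} into the biconjugate formula of Proposition \ref{p8}. For the ``if'' direction, suppose $T(f)(x)=\tau f(Ex+c)+\langle w,x\rangle+\beta$ as in \eqref{e23}, with $E$ a continuous automorphism of $X$, $c\in X$, $w\in X^*$, $\beta\in\re$, $\tau\in\re_{++}$. The map $x\mapsto Ex+c$ is a continuous affine bijection of $X$, so $f\mapsto f\circ(E(\cdot)+c)$ preserves lower semicontinuity, properness and convexity, and multiplying by $\tau>0$ and adding the continuous affine function $h_{w,\beta}$ keeps us inside $\mathscr{C}(X)$; hence $T$ maps $\mathscr{C}(X)$ into itself. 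Since $E(\cdot)+c$ is surjective, $f\le g$ iff $f(Ex+c)\le g(Ex+c)$ for all $x\in X$ iff $T(f)\le T(g)$; and given $g\in\mathscr{C}(X)$, the function $f(y):=\tau^{-1}\bigl(g(E^{-1}(y-c))-\langle w,E^{-1}(y-c)\rangle-\beta\bigr)$ belongs to $\mathscr{C}(X)$ and satisfies $T(f)=g$. Thus $T\in\mathscr{B}$.

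For the ``only if'' direction, let $T\in\mathscr{B}$. By Corollary \ref{c5} there are a continuous automorphism $D$ of $X^*$, an element $d\in X^{**}$, $w\in X^*$, $\beta\in\re$ and $\tau\in\re_{++}$ with $\widehat T(u,\alpha)=(Du+w,\langle d,u\rangle+\tau\alpha+\beta)$. Every $f\in\mathscr{C}(X)$ is the supremum of its continuous affine minorants, and $T$ commutes with this supremum by Proposition \ref{p1}(ii), so
\[
T(f)(x)=\langle w,x\rangle+\beta+\sup_{h_{u,\alpha}\in A(f)}\bigl\{\langle Du,x\rangle+\langle d,u\rangle+\tau\alpha\bigr\}
\]
for every $x\in X$. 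Writing $\langle Du,x\rangle=\langle D^*x,u\rangle$ — with $D^*:X^{**}\to X^{**}$ the adjoint and $X$ identified with its canonical image in $X^{**}$ — pulling the factor $\tau>0$ out of the supremum, and applying Proposition \ref{p8}(ii), this becomes
\[
T(f)(x)=\langle w,x\rangle+\beta+\tau\,f^{**}\bigl(\tau^{-1}(D^*x+d)\bigr),\qquad x\in X.
\]

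The crucial step — and the one I expect to be the main obstacle, since it is vacuous in finite dimensions — is to show that $D^*$ carries the canonical copy of $X$ onto $X$ and that $d\in X$. I would argue from properness. Apply the last representation to the indicator $g_p\in\mathscr{C}(X)$ of a singleton $\{p\}$, $p\in X$: its biconjugate equals $0$ at $p$ and $+\infty$ elsewhere on $X^{**}$, so $T(g_p)$ is finite at $x$ exactly when $D^*x=\tau p-d$, and since $T(g_p)\in\mathscr{C}(X)$ is proper this equation must have a solution $x\in X$ for every $p\in X$. Taking $p=0$ gives $d\in D^*(X)$, whence $X\subseteq D^*(X)$. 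The same reasoning applied to $T^{-1}\in\mathscr{B}$ — whose associated automorphism in Corollary \ref{c5} is $D^{-1}$, as follows from $\widehat{T^{-1}}=(\widehat T)^{-1}$ by comparing linear parts — yields $X\subseteq(D^*)^{-1}(X)$, i.e.\ $D^*(X)\subseteq X$. Hence $D^*$ restricts to a bijection of $X$; it is bounded with bounded inverse $(D^{-1})^*|_X$, so $E:=\tau^{-1}D^*|_X$ is a continuous automorphism of $X$, and $d\in D^*(X)=X$.

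It remains only to read off \eqref{e23}. With $c:=\tau^{-1}d\in X$ we have $\tau^{-1}(D^*x+d)=Ex+c\in X$ for every $x\in X$, so Proposition \ref{p8}(i) gives $f^{**}\bigl(\tau^{-1}(D^*x+d)\bigr)=f(Ex+c)$; inserting this into the representation of $T(f)$ above yields $T(f)(x)=\tau f(Ex+c)+\langle w,x\rangle+\beta$, as required. Apart from the equality $D^*(X)=X$ just discussed, everything reduces to routine manipulation of conjugates and of the adjoint identity, together with the already-established Corollary \ref{c5}.
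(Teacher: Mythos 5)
Your proof is correct and follows essentially the same route as the paper's: Corollary \ref{c5} together with Propositions \ref{p1}(ii) and \ref{p8}(ii) give the representation $T(f)(x)=\tau f^{**}(C^*x+c)+\langle w,x\rangle+\beta$ with $C=\tau^{-1}D$, $c=\tau^{-1}d$, indicator functions of singletons are used to show that $x\mapsto C^*x+c$ maps $X$ onto $X$, and Proposition \ref{p8}(i) then converts $f^{**}$ back into $f$; your ``if'' direction, with its explicit preimage, matches the paper's computation of $T[E,c,w,\tau,\beta]^{-1}$. The only point of divergence is the inclusion $D^*(X)\subseteq X$: the paper uses surjectivity of $T$ to pull back the indicator of a point of $X$ and contradicts properness of the preimage, whereas you apply the first inclusion to $T^{-1}\in\mathscr{B}$, whose associated automorphism is $D^{-1}$ since $\widehat{T^{-1}}=(\widehat T)^{-1}$; both arguments are valid, and yours is arguably a touch cleaner because it reuses the first half by symmetry.
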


\begin{proof}
We start with the ``only if" statement.
As already mentioned, a basic convex analysis result establishes that $f=\sup_{h\in A(f)}h$.
In view of Propositions \ref{p1}(ii) and \ref{pp3}, we have
$$
T(f)=\sup_{h\in A(f)}\widehat T(h)=\sup_{h_{u,\alpha}\in A(f)}\widehat T\left(h_{u,\alpha}\right)=
\sup_{h_{u,\alpha}\in A(f)}h_{y(u),\gamma(u,\alpha)},
$$
meaning that
\begin{equation}\label{e24}
T(f)(x)=\sup_{h_{u,\alpha}\in A(f)}\{\langle y(u),x\rangle+\gamma(u,\alpha)\}
=\sup_{h_{u,\alpha}\in A(f)}\{\langle Du+w,x\rangle+\langle d,u\rangle+\tau\alpha+\beta\},
\end{equation}
using Corollary \ref{c5} in the last equality. Define now $C\in$ Aut$(X^*)$, $c\in X^{**}$
as $C=\tau^{-1}D, c=\tau^{-1}d$. 
Continuing from \eqref{e24},
$$
T(f)(x)=
\sup_{h_{u,\alpha}\in A(f)}\{\langle Du,x\rangle +
\langle d,u\rangle+\tau\alpha\}+
\langle w,x\rangle +\beta=
$$
\begin{equation}\label{e25}
\tau\left[\sup_{h_{u,\alpha}\in A(f)}\{\langle Cu,x\rangle +
\langle c,u\rangle+\alpha\}\right]+
\langle w,x\rangle +\beta=
\tau\left[\sup_{h_{u,\alpha}\in A(f)}\{\langle C^*x+c,u\rangle 
+\alpha\}\right]+
\langle w,x\rangle +\beta.
\end{equation}
Note that $C^*$ is an operator in $X^{**}$, so that the expression $C^*x$ in \eqref{e25}
must be understood through the natural immersion of $X$ in $X^{**}$ (i.e., from now on
we consider $X$ as a subspace of $X^{**}$). 
Observe that, by Proposition \ref{p8}(ii),
\begin{equation}\label{ee25}
\sup_{h_{u,\alpha}\in A(f)}\{\langle a,u\rangle +\alpha\}=f^{**}(a).
\end{equation}
Replacing \eqref{ee25} in \eqref{e25}, 
with $a=C^*x+c\in X^{**}$, we get 
\begin{equation}\label{ea25}
T(f)(x) =\tau f^{**}(C^*x+c) +\langle w,x\rangle+\beta.
\end{equation}

Define $Y\subset X^{**}$ as $Y=\{C^*x+c: x\in X\}$. We claim that $Y=X$. Assume first
that there exists $\widetilde x\in X\setminus Y$, and define  
$f_1:X\to\re\cup\{+\infty\}$ as the indicator function
of $\{\widetilde x\}$, i.e. $f_1(x)=0$ if $x=\widetilde x$, $f_1(x)=+\infty$ 
otherwise. It is easy to check that $f_1^{**}:X^{**}\to\re\cup\{+\infty\}$ 
is still the indicator function
of $\{\widetilde x\}$, seen now as a subset of $X^{**}$. We look now at
$T(f_1)$. Since $\widetilde x\notin Y$, we have $C^*x+c\neq\widetilde x$ for
all $x\in X$, so that $f_1^{**}(C^*x+c)=+\infty$ for all $x\in X$. In view
of \eqref{ea25}, $T(f_1)(x)=+\infty$ for all $x\in X$, implying that
$T(f_1)$ is not proper, and so $T(f_1)\notin\mathscr{C}(X)$, contradicting
our assumptions on $T$. We have shown that $X\subset Y$. Suppose now
that there exists $\check x\in Y\setminus X$. Since $\check x$
belongs to $Y$, there exists $x'\in X$ such that $C^*x'+c=\check x$. Define 
$f_2:X\to\re\cup\{+\infty\}$ as the indicator function of $\{x'\}$,
i.e. $f_2(x)=0$ if $x=x'$, $f_2(x)=+\infty$ otherwise. Since $T$ 
is onto, there exists $f\in\mathscr{C}(X)$ such that $T(f)=f_2$. 
Let $\xi=-\tau^{-1}(\langle w,x'\rangle+\beta)$. In view
of \eqref{ea25} we have $f^{**}(C^*x+c)=\xi$ if $x=x'$,
$f^{**}(C^*x+c)=+\infty$ 
otherwise. So, there exists a unique point in $Y$, namely $C^*x'+c=\check x$,
where $f^{**}$ takes a finite value, or equivalently, $f^{**}(z)=+\infty$
for all $z\in Y\setminus\{\check x\}$. Since we already know that $X\subset Y$
and $\check x\notin X$, so that $X\subset Y\setminus\{\check x\}$, 
we conclude  that $f^{**}(z) =+\infty$ for all $z\in X$, and hence, in view
of Proposition \ref{p8}(i), $f(x)=+\infty$ for all $x\in X$, so that $f$ is
not proper, implying that 
$f\notin\mathscr{C}(X)$, a contradiction. We have completed the proof of
the equality between $X$ and $Y$, establishing the claim. Now, since $C^*x+c\in X$
for all $x\in X$, as we have just proved, we apply Proposition \ref{p8}(i),
and rewrite \eqref{ea25} as:
\begin{equation}\label{eb25}
T(f)(x) =\tau f(C^*x+c) +\langle w,x\rangle+\beta.
\end{equation}
We observe now that, since $C^*x+c\in X$ for all $x\in X$, taking $x=0$,
we get $c\in X$. As a consequence, the restriction of $C^*$ to $X$ is
an automorphism of $X$. At this point, that fact that this automorphism
is the restriction to $X$ of the adjoint of an automorphism $C$ of $X^*$,
is not significant any more, since any automorphism $E$ of $X$ is of
the restriction to $X$ of some $C^*:X^{**}\to X^{**}$: it
suffices to define $C:X^*\to X^*$ as $C=E^*$.
Substituting $E$ for $C^*$ in \eqref{eb25}, we recover \eqref{e23},
completing the proof of the ``only if" statement.

Now we prove the ``if" statement, i.e., we must show that operators $T$
of the form given by \eqref{e23} belong to $\mathscr{B}$. First, it is immediate that
such operators map 
$\mathscr{C}(X)$ to $\mathscr{C}(X)$. Now,
we will denote as $T[E,c,w,\tau,\beta]$
the operator $T$ defined by \eqref{e23}. Clearly, $T[E,c,w,\tau,\beta]$ is order preserving.
In order to complete
the proof, we must show that $T[E,c,w,\tau,\beta]$ is fully order preserving and onto.
It suffices to show that $T[E,c,w,\tau,\beta]$ is invertible, and that its inverse is
also order preserving. It is easy to prove that  
\begin{equation}\label{ee26}
T[E,c,w,\tau,\beta]^{-1}=
T[\bar E,\bar c, \bar w,\bar\tau,\bar\beta],
\end{equation}
with
\begin{equation}\label{e27}
\bar E=E^{-1},\quad\bar c=-E^{-1}c,\quad\bar w=-\tau^{-1}\left(E^{-1}\right)^*w,\quad\bar\tau=\tau^{-1},\quad
\bar\beta=\tau^{-1}(\langle E^{-1}c,w\rangle-\beta).
\end{equation}
Note that $E^{-1}$ exists because $E$ is a continuous automorphism of $X$, and is continuous
by virtue of the Closed Graph Theorem (see, e.g., Corollary 2.7 in \cite{Bre}).
Since 
$\bar T= T[\bar C, \bar c, \bar w, \bar\tau, \bar\beta]$ is also of the form given
by \eqref{e23}, we conclude that $T^{-1}=\bar T$ is order preserving, showing that
$T$ is fully order preserving, and completing the proof.
\end{proof}

We  remark that in the reflexive case, the fact that $X=Y$ follows immediately
from the equality between $X$ and $X^{**}$.

The result in Theorem \ref{t1} can be rephrased as saying that 
the identity operator is the only 
fully order preserving operator in $\mathscr{C}(X)$,
up to addition of affine functionals, pre-composition
with affine operators, and multiplication by positive scalars, thus extending
to Banach spaces the result established in \cite{AM2}
for the finite dimensional case.

It is also worthwhile to consider another set of operators on $\mathscr{C}(X)$,  
namely the one consisting
of {\it order preserving involutions}, i.e., of those order preserving operators 
$T:\mathscr{C}(X)\to\mathscr{C}(X)$ which are {\it involutions}, namely operators $T$ 
such that $T(T(f))=f$
for all $f\in \mathscr{C}(X)$. Since for such a $T$ we have $T^{-1}=T$, it follows
that $T^{-1}$ is also order preserving, and hence $T$ is fully order preserving,
so that such set is indeed a subset of $\mathscr{B}$. The characterization of order
preserving involutions in $\mathscr{C}(X)$ is given in the following corollary,
whose finite dimensional version can be found in Theorem 1 of \cite{AM2}. 

\begin{Cor}\label{c7}
An operator $T:\mathscr{C}(X)\to\mathscr{C}(X)$ is an order preserving involution 
if and only if there exist  
$c\in X$, $w\in X^*$ and a  continuous automorphism 
$E$ of $X$, satisfying $E^2=I_{X}, c\in{\rm Ker}(E+I_X), w\in {\rm Ker}(E^*+I_{X^*})$
such that 
\begin{equation}\label{e31}
T(f)(x) =f(Ex+c) +\langle w,x\rangle-\frac{1}{2}\langle c,w\rangle
\end{equation}
for all $x\in X$,
where $I_{X}$ denotes the identity operator in $X$. 
\end{Cor}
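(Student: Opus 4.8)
The plan is to read off Corollary \ref{c7} from Theorem \ref{t1} together with the inversion formula \eqref{ee26}--\eqref{e27}; the only ingredient not already in hand is the uniqueness of the representation $T=T[E,c,w,\tau,\beta]$, which I would establish first. \emph{Uniqueness of the parameters:} given an operator $T$ of the form \eqref{e23}, evaluating it at the constant function $0\in\mathscr{C}(X)$ returns the affine function $x\mapsto\langle w,x\rangle+\beta$, so $w$ and $\beta$ are determined by $T$; evaluating at the constant function $1$ returns $x\mapsto\tau+\langle w,x\rangle+\beta$, which pins down $\tau$; and evaluating at the indicator $f_a$ of a singleton $\{a\}\subset X$ (equal to $0$ at $a$ and $+\infty$ elsewhere) returns a function finite at exactly one point, namely $E^{-1}(a-c)$, so the affine map $a\mapsto E^{-1}a-E^{-1}c$ is determined, whence $E$ and $c$ are determined. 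This short argument is the only genuinely new computation the proof requires.

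\emph{The ``only if'' direction.} If $T$ is an order preserving involution then $T^{-1}=T$ is order preserving, hence $T\in\mathscr{B}$, and Theorem \ref{t1} gives $T=T[E,c,w,\tau,\beta]$ for suitable parameters. By \eqref{ee26}--\eqref{e27}, $T^{-1}=T[E^{-1},-E^{-1}c,-\tau^{-1}(E^{-1})^{*}w,\tau^{-1},\tau^{-1}(\langle E^{-1}c,w\rangle-\beta)]$. Equating $T=T^{-1}$ and invoking uniqueness forces $E=E^{-1}$, $\tau=\tau^{-1}$, $c=-E^{-1}c$, $w=-\tau^{-1}(E^{-1})^{*}w$, and $\beta=\tau^{-1}(\langle E^{-1}c,w\rangle-\beta)$. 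These yield in turn $E^{2}=I_{X}$; $\tau=1$ (as $\tau>0$); $c\in{\rm Ker}(E+I_{X})$ (rewriting $c=-Ec$); $w\in{\rm Ker}(E^{*}+I_{X^{*}})$ (rewriting $w=-E^{*}w$); and, using $E^{-1}c=Ec=-c$, the last relation becomes $\beta=-\langle c,w\rangle-\beta$, i.e. $\beta=-\tfrac12\langle c,w\rangle$. Substituting into \eqref{e23} yields \eqref{e31}.

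\emph{The ``if'' direction.} Conversely, given a continuous automorphism $E$ of $X$ with $E^{2}=I_{X}$, together with $c\in{\rm Ker}(E+I_{X})$ and $w\in{\rm Ker}(E^{*}+I_{X^{*}})$, set $T=T[E,c,w,1,-\tfrac12\langle c,w\rangle]$, which is the operator appearing in \eqref{e31}. By the ``if'' part of Theorem \ref{t1}, $T\in\mathscr{B}$; in particular $T$ is order preserving and sends $\mathscr{C}(X)$ into itself. To see that $T$ is an involution I would plug into \eqref{ee26}--\eqref{e27}: using $E^{-1}=E$, $Ec=-c$, $E^{*}w=-w$ and $\beta=-\tfrac12\langle c,w\rangle$ one obtains $\bar E=E$, $\bar c=-Ec=c$, $\bar w=-E^{*}w=w$, $\bar\tau=1$ and $\bar\beta=\langle -c,w\rangle-\beta=-\langle c,w\rangle+\tfrac12\langle c,w\rangle=\beta$, so $T^{-1}=T$ and hence $T\circ T$ is the identity; being also order preserving, $T$ is an order preserving involution. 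I expect no serious obstacle beyond bookkeeping with the convention $\langle E^{-1}c,w\rangle=w(E^{-1}c)$ from \eqref{e27}, which is what renders the scalar $\langle c,w\rangle$ in \eqref{e31} unambiguous.
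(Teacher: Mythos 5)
Your proof is correct and takes essentially the same route as the paper: reduce the problem to characterizing those $T\in\mathscr{B}$ with $T=T^{-1}$, invoke Theorem \ref{t1} together with the inversion formulas \eqref{ee26}--\eqref{e27}, and match parameters to obtain $E^2=I_X$, $\tau=1$, $c\in{\rm Ker}(E+I_X)$, $w\in{\rm Ker}(E^*+I_{X^*})$ and $\beta=-\tfrac12\langle c,w\rangle$. The one point where you go beyond the paper is in spelling out the injectivity of the parametrization $(E,c,w,\tau,\beta)\mapsto T[E,c,w,\tau,\beta]$ (via evaluation at the constants $0$ and $1$ and at singleton indicators), a step the paper compresses into ``as can be seen after some elementary algebra''; your explicit argument is sound and closes that small gap cleanly.
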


\begin{proof}
It is immediate that $T$ is an order preserving involution if and only if $T\in
\mathscr{B}$ and $T=T^{-1}$. In view of the formula for $T^{-1}$ given by \eqref{ee26},
an operator $T\in \mathscr{B}$ satisfies $T=T^{-1}$ if and only if 
$T[E,c,w,\tau,\beta]=T[\bar E,\bar c, \bar w,\bar\tau,\bar\beta]$, with 
$\bar E,\bar c, \bar w,\bar\tau,\bar\beta$  given by \eqref{e27}, which occurs
if and only if $\bar E=E, \bar c =c, \bar w=w, \bar\tau=\tau$ and $\bar\beta=\beta$,
as can be seen after some elementary algebra.
It is easy to check, using \eqref{e27}, that these equalities 
are equivalent to $E^2=I_{X},
c\in$ Ker$(E+I_X), w\in$ Ker$(E^*+I_{X^*})$, $\tau=1$ 
and $\beta=-\langle w,c\rangle/2$,
in which case \eqref{e23} reduces to \eqref{e31}, so that the result is just a 
consequence of Theorem \ref{t1}.
\end{proof}

\section{The order reversing case}\label{s3}

In this section we will characterize fully order reversing operators, but
before defining them formally, we must discuss the appropriate co-domains
for such operators. In \cite{AM2}, the prototypical fully order reversing
operator turns out to be the Fenchel conjugation, and it is reasonable to expect
that it will play a similar role in the infinite
dimensional case. Of course, the operator
which sends $f\in\mathscr{C}(X)$ to its Fenchel conjugate 
$f^*:X^*\to\re\cup\{+\infty\}$ is well
defined in any Banach space; however its co-domain is not  
$\mathscr{C}(X)$, but rather $\mathscr{C}(X^*)$.
There is an additional complication. As we have seen in the order preserving
case, the fact that we are dealing with surjective operators, whose inverses
enjoy a similar order preserving property, is rather essential for the 
characterization. It turns out to be the case that in nonreflexive Banach
spaces the Fenchel conjugation is not onto, generally speaking. In fact,
it is well known that the image
of $\mathscr{C}(X)$ through the Fenchel conjugation coincides with the
class of weak$^*$ lower semicontinuous proper convex functions defined on $X^*$.  

Thus, we define $\mathscr{C}_{w^*}(X^*)$ as the set of weak$^*$ lower semicontinuous
proper convex functions $g:X^*\to\re\cup\{+\infty\}$. We give next the formal definition 
of order reversing operators.
\begin{Def}\label{d2}
\begin{itemize}
\item[i)]  
An operator
$S:\mathscr{C}(X)\to\mathscr{C}_{w^*}(X^*)$ is {\it order reversing} whenever
$f\le g$ implies $S(f) \ge S(g)$.
\item[ii)]
An operator
$S:\mathscr{C}(X)\to\mathscr{C}_{w^*}(X^*)$ is {\it fully order reversing} whenever
$f\le g$ iff $S(f) \ge S(g)$, and additionally $S$ is onto.
\end{itemize}
\end{Def}

We will characterize fully order reversing operators with this definition,
and the result will be an easy consequence of Theorem \ref{t1}.
The issue of characterizing fully order reversing operators $S:\mathscr{C}(X)\to
\mathscr{C}(X)$ (or S:\,\,$\mathscr{C}(X)\to\mathscr{C}(X^*)$ in the nonreflexive
case), are left as open problems, deserving future research.  

We start with an elementary result on the conjugate of compositions with 
affine operators and additions of affine functionals.

\begin{proposition}\label{p9} 
Consider $f\in\mathscr{C}(X)$. Let $E$ be a continuous automorphism of $X$,  
and take $c \in X ,w\in X^*$, $\beta\in\re$, $\tau\in\re_{++}$. Define $g\in\mathscr{C}(X)$ as
$g(x)=\tau f(Ex+c)+\langle w,x\rangle +\beta$. Then
$$
g^*(u)=\tau f^*(H^*u+v)+\langle u,y\rangle+\rho
$$ 
for all $u\in X^*$, where
$$
H=\tau^{-1}E^{-1},\quad v=-\tau^{-1}(E^{-1})^*w,\quad 
y=-\tau^{-1}E^{-1}c\in X,\quad
\rho=\tau^{-1}(\langle w,E^{-1}c\rangle-\beta).
$$
\end{proposition}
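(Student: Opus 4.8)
The plan is to prove the identity by a direct computation from the definition $g^*(u)=\sup_{x\in X}\{\langle u,x\rangle-g(x)\}$, after decomposing $g$ into a rescaled, affinely precomposed copy of $f$ plus an affine functional and handling each ingredient with an elementary change of variables. Write $g=k+h_{w,\beta}$, where $k(x)=\tau f(Ex+c)$; note that $g\in\mathscr{C}(X)$ (since $f\in\mathscr{C}(X)$, $\tau>0$, $E$ is a continuous automorphism and $h_{w,\beta}\in\mathscr{A}$), so that $g^*$ is a well-defined function on $X^*$. The first step is to peel off the affine functional: straight from the definition, for any $k:X\to\re\cup\{+\infty\}$,
$$
(k+h_{w,\beta})^*(u)=\sup_{x\in X}\{\langle u,x\rangle-k(x)-\langle w,x\rangle-\beta\}=\sup_{x\in X}\{\langle u-w,x\rangle-k(x)\}-\beta=k^*(u-w)-\beta ,
$$
so that $g^*(u)=k^*(u-w)-\beta$ and it only remains to compute $k^*$.

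To compute $k^*(p)=\sup_{x\in X}\{\langle p,x\rangle-\tau f(Ex+c)\}$, I would use the substitution $z=Ex+c$, i.e.\ $x=E^{-1}(z-c)$, which is legitimate because $x\mapsto Ex+c$ is a bijection of $X$. Writing $\langle p,E^{-1}(z-c)\rangle=\langle(E^{-1})^*p,z\rangle-\langle(E^{-1})^*p,c\rangle$ and recalling $\langle(E^{-1})^*p,c\rangle=\langle p,E^{-1}c\rangle$, this gives
$$
k^*(p)=\sup_{z\in X}\{\langle(E^{-1})^*p,z\rangle-\tau f(z)\}-\langle p,E^{-1}c\rangle .
$$
Since $\tau>0$, one has $\sup_{z}\{\langle q,z\rangle-\tau f(z)\}=\tau\sup_{z}\{\langle\tau^{-1}q,z\rangle-f(z)\}=\tau f^*(\tau^{-1}q)$; applying this with $q=(E^{-1})^*p$ yields $k^*(p)=\tau f^*(\tau^{-1}(E^{-1})^*p)-\langle p,E^{-1}c\rangle$.

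Finally, substituting $p=u-w$ and subtracting $\beta$,
$$
g^*(u)=\tau f^*(\tau^{-1}(E^{-1})^*(u-w))-\langle u-w,E^{-1}c\rangle-\beta .
$$
Using $(\tau^{-1}E^{-1})^*=\tau^{-1}(E^{-1})^*$, the argument of $f^*$ is $H^*u+v$ with $H=\tau^{-1}E^{-1}$ and $v=-\tau^{-1}(E^{-1})^*w$; collecting the term linear in $u$ and the remaining constant then produces an expression of the form $\langle u,y\rangle+\rho$, and reading off $y$ and $\rho$ (and checking that they lie in $X$ and in $\re$) finishes the proof. There is no real obstacle here beyond careful bookkeeping: the one thing to be attentive to is keeping separate the operator $E^{-1}$ acting on $X$, its adjoint $(E^{-1})^*$ acting on $X^*$, and the scalar $\tau^{-1}$, and inserting the latter at exactly the right place when the supremum is rescaled.
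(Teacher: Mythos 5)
Your computation is the intended one: the paper's own proof of this proposition consists of the single word ``Elementary'', and the route you take (peel off the affine functional, substitute $z=Ex+c$, pull the factor $\tau$ out of the supremum) is exactly the elementary computation being alluded to. Every step up to and including your final displayed formula
$$
g^*(u)=\tau f^*\bigl(\tau^{-1}(E^{-1})^*(u-w)\bigr)-\langle u-w,E^{-1}c\rangle-\beta
$$
is correct, and the identifications $H=\tau^{-1}E^{-1}$ and $v=-\tau^{-1}(E^{-1})^*w$ are right.

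However, the last sentence of your proof (``reading off $y$ and $\rho$ \dots finishes the proof'') is precisely where you should not have stopped, because carrying out that bookkeeping does \emph{not} reproduce the stated constants. Expanding $-\langle u-w,E^{-1}c\rangle-\beta$ gives $\langle u,-E^{-1}c\rangle+\langle w,E^{-1}c\rangle-\beta$, i.e.\ $y=-E^{-1}c$ and $\rho=\langle w,E^{-1}c\rangle-\beta$, whereas the statement asserts $y=-\tau^{-1}E^{-1}c$ and $\rho=\tau^{-1}\left(\langle w,E^{-1}c\rangle-\beta\right)$; these differ by a factor $\tau^{-1}$ whenever $\tau\neq 1$. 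Your derivation is the correct one: for instance, with $f(x)=x^2/2$, $E=I$, $c=1$, $w=0$, $\beta=0$, $\tau=2$ one gets $g(x)=(x+1)^2$ and hence $g^*(u)=u^2/4-u$, while the formula with the stated $y$ gives $u^2/4-u/2$ (the algebra here is dimension-independent, so the one-dimensional check is decisive). So your argument proves the proposition with the corrected constants $y=-E^{-1}c$ and $\rho=\langle w,E^{-1}c\rangle-\beta$, and as written it does not prove the statement literally as given; you should either have flagged the apparent typo in the statement or noticed the mismatch when reading off the constants. (The discrepancy is harmless for Theorem \ref{t2}, which only asserts the existence of some $y\in X$ and $\rho\in\re$ of this form.)
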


\begin{proof}
Elementary.
\end{proof}

Now we present our characterization of fully order reversing operators, which extends
to Banach spaces the similar finite dimensional result in \cite{AM2}.

\begin{Thm}\label{t2}
An operator $S:\mathscr{C}(X)\to\mathscr{C}_{w^*}(X^*)$ is fully order 
reversing
if and only if there exist
$v\in X^*$, $y\in X$, $\rho\in\re$,  $\tau\in\re_{++}$ and a continuous 
automorphism
$H$ of $X$ such that
$$
S(f)(u) =\tau f^*(H^*u+v) +\langle u,y\rangle+\rho,
$$
for all $f\in\mathscr{C}(X)$ and all $u\in X^*$.
\end{Thm}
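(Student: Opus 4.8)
The plan is to derive Theorem~\ref{t2} from Theorem~\ref{t1} by composing a candidate order reversing operator with the Fenchel conjugation to produce an order preserving operator, and then invoking the already-proved characterization. For the ``if'' direction, suppose $S(f)(u)=\tau f^*(H^*u+v)+\langle u,y\rangle+\rho$. Since the Fenchel conjugation $f\mapsto f^*$ is order reversing (immediate from the definition: a larger function has a smaller supremum of $\langle u,\cdot\rangle - f(\cdot)$), and since pre-composition with the automorphism $H^*$, addition of the affine functional $\langle\cdot,y\rangle$, and multiplication by $\tau>0$ are all order preserving, the composite $S$ is order reversing. To see it is \emph{fully} order reversing and onto, I would note that $S$ factors as $S = T\circ J$ where $J(f)=f^*$ maps $\mathscr C(X)$ bijectively onto $\mathscr C_{w^*}(X^*)$ (this is the standard fact, recalled in the text, that the image of $\mathscr C(X)$ under Fenchel conjugation is exactly the weak$^*$ lsc proper convex functions on $X^*$), and $T$ is the operator $g(u)\mapsto \tau g(H^*u+v)+\langle u,y\rangle+\rho$ acting on $\mathscr C_{w^*}(X^*)$. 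Using Proposition~\ref{p9} to invert, or directly, $J$ is a bijection with $J^{-1}$ also order reversing; and $T$ is of the form covered by Theorem~\ref{t1} (applied to the space $X^*$, or rather its predual structure), hence a bijection of $\mathscr C_{w^*}(X^*)$ with order preserving inverse. Composing, $S$ is a bijection and $S^{-1}=J^{-1}\circ T^{-1}$ reverses order.

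For the ``only if'' direction, let $S:\mathscr C(X)\to\mathscr C_{w^*}(X^*)$ be fully order reversing. The key step is to set $T := J^{-1}\circ S$, i.e.\ $T(f) = (S(f))^{\,\lozenge}$ where $\lozenge$ denotes the inverse of Fenchel conjugation on $\mathscr C_{w^*}(X^*)$ (equivalently, since $S(f)\in\mathscr C_{w^*}(X^*)$, one has $S(f)=(S(f))^{*}{}_{|X}{}^{*}$-type identity; concretely $T(f)$ is the function on $X$ whose conjugate is $S(f)$). Then $T:\mathscr C(X)\to\mathscr C(X)$ is a composition of two order reversing bijections, hence order preserving, onto, with order preserving inverse — that is, $T\in\mathscr B$. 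By Theorem~\ref{t1}, there exist $c\in X$, $w\in X^*$, $\beta\in\re$, $\tau\in\re_{++}$ and a continuous automorphism $E$ of $X$ with $T(f)(x)=\tau f(Ex+c)+\langle w,x\rangle+\beta$. Now $S(f) = (T(f))^*$, so I apply Proposition~\ref{p9} to the function $g=T(f)$, which yields exactly $S(f)(u)=\tau f^*(H^*u+v)+\langle u,y\rangle+\rho$ with $H,v,y,\rho$ given by the formulas in Proposition~\ref{p9} in terms of $E,c,w,\beta,\tau$. One checks $H$ is a continuous automorphism of $X$ (as $E$ is), completing the argument.

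The main obstacle I anticipate is purely bookkeeping rather than conceptual: one must be careful about the domains and the ``$\lozenge$'' (inverse conjugation) operation, because on a nonreflexive $X$ the second conjugate lives on $X^{**}$, not $X$. The clean way around this is to use the standard bijection between $\mathscr C(X)$ and $\mathscr C_{w^*}(X^*)$ given by $f\mapsto f^*$ with inverse $g\mapsto g_*$ (the conjugate of $g$ restricted from $X^{**}$ back to $X$, which is well defined precisely because $g$ is weak$^*$ lsc), and to verify that both directions of this bijection are order reversing — this is essentially Proposition~\ref{p8}(i) together with the weak$^*$-lsc version of Fenchel--Moreau. A second minor point is verifying that the intermediate operator $T$ genuinely lands in $\mathscr B$, i.e.\ that being a composite of two fully order reversing bijections forces \emph{full} order preservation and surjectivity; this is formal. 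Once these are in place, the theorem is an immediate corollary of Theorem~\ref{t1} and Proposition~\ref{p9}, and no new hard analysis (no uniform boundedness, no separation arguments) is needed.
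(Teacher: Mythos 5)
Your proposal is correct and follows essentially the same route as the paper: compose $S$ with the inverse of the Fenchel conjugation (a fully order reversing bijection onto $\mathscr{C}_{w^*}(X^*)$) so as to land in $\mathscr{B}$, apply Theorem \ref{t1}, and then use Proposition \ref{p9} to pass back through the conjugation. The only cosmetic difference is that in the ``if'' direction you invoke Theorem \ref{t1} on $\mathscr{C}_{w^*}(X^*)$, which is not literally its setting; the paper instead observes directly that the conjugation is fully order reversing and the affine map $u\mapsto H^*u+v$ is invertible, which is all that is needed there.
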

\begin{proof}
Define $F:\mathscr{C}(X)\to\mathscr{C}(X^*)$ as $F(f)=f^*$, i.e. $F$ is the Fenchel conjugation.
It is immediate from its definition that $F$ is order reversing. Note that
$f^*$ is convex, proper, and also lower semicontinuous in the weak$^*$ topology
for all $f\in\mathscr{C}(X)$, being the supremum of affine functions which are
continuous in this topology. Since $H^*$ is  weak$^*$-continuous (see e.g., \cite{Bre}, p. 81), 
the same holds
for $S(f)$. Hence,  
$F(\mathscr{C}(X))\subset\mathscr{C}_{w^*}(X^*)$.
We recall also that the topological dual of $X^*$ with the weak$^*$ topology is precisely
$X$ with the strong topology (see, e.g., Proposition 2.3(ii) in \cite{BaP}).
We observe that $F$ is invertible; in fact, its inverse 
$F^{-1}:\mathscr{C}_{w^*}(X^*)\to\mathscr{C}(X)$ is given by
\begin{equation}\label{e32}
F^{-1}(g)=\sup_{u\in X^*}\{\langle u,\cdot\rangle-g(u)\}
\end{equation}
for all $g\in\mathscr{C}_{w^*}(X^*)$. 
We conclude from \eqref{e32} that    
$F(\mathscr{C}(X))=\mathscr{C}_{w^*}(X^*)$.
It follows also from \eqref{e32} that $F^{-1}$ is order
reversing, and, since $F$ is also order reversing, we obtain from the  
characterization of the image of $F$ that $F^{-1}$ is fully order reversing.

Consider now the operator $F^{-1}\circ S:\mathscr{C}(X)\to\mathscr{C}(X)$.  Being the composition
of two fully order reversing operators, it is fully order preserving. It follows from
Theorem \ref{t1} that there exist
$c\in X, w\in X^*, \beta\in\re, \tau\in\re_{++}$ and a continuous automorphism $E$ of $X$, such that  
\begin{equation}\label{e33}
F^{-1}\circ S=T[E,c,w,\tau,\beta], 
\end{equation}
where $T[E,c,w,\tau,\beta]\in\mathscr{B}$, as in the proof of Theorem \ref{t1}, 
is defined as
$$
T[E,c,w,\tau,\beta](f)(x)=\tau f(Ex+c)+\langle w,x\rangle+\beta,
$$
for all $f\in\mathscr{C}(X)$ and all $x\in X$. It follows from \eqref{e33} that
$S=F\circ T[E,c,w,\tau,\beta]$, so that $S(f)=g^*$,  where $g\in\mathscr{C}(X)$
is defined as $g(x)=\tau 
f(Ex+c)+\langle w,x\rangle+\beta$. The ``only if" statement follows then from Proposition \ref{p9}.
The ``if" statement is a consequence of the facts that the Fenchel conjugation is a fully order
reversing operator and the affine operator $L(u)=H^*u+v$ is invertible. 
\end{proof}

We remark that when $X$ is reflexive the
weak and the weak$^*$ topologies in $X^*$ coincide, so that $\mathscr{C}(X^*)=
\mathscr{C}_{w^*}(X^*)$, and hence in this case Theorem \ref{t2} characterizes
the set of fully order reversing operators $S:\mathscr{C}(X)\to\mathscr{C}(X^*)$.
In fact, it is easy to check that reflexivity of $X$ is equivalent to surjectivity
of the Fenchel conjugation as a map from $\mathscr{C}(X)$ to $\mathscr{C}(X^*)$.  

As in the case of fully order preserving operators, the result of
Theorem \ref{t2} can be rephrased as saying that
the Fenchel conjugation is the only 
fully order reversing operator from $\mathscr{C}(X)$ to $\mathscr{C}_{w^*}(X^*)$,
up to addition of affine functionals, pre-composition
with affine operators, and multiplication by positive scalars.

\bigskip

\noindent {\bf Acknowledgments:} We thank an anonymous referee and the handling editor for
their very pertinent comments and recommendations.

\end{document}